\begin{document}

\title{Moving frames and the characterization of curves that lie on a surface\footnote{The final publication is available at Springer via\\ \url{http://dx.doi.org/10.1007/s00022-017-0398-7}; (J. Geom. 2017)}}



\author{Luiz C. B. da Silva}


\institute{L. C. B. da Silva - orcid.org/0000-0002-2702-9976 \at
              Departamento de Matem\'atica, Universidade Federal de Pernambuco, \\
              50670-901, Recife, Pernambuco, Brazil \\
              \email{luizsilva@dmat.ufpe.br}           
}

\date{Received: date / Accepted: date}

\maketitle

\begin{abstract}
In this work we are interested in the characterization of curves that belong to a given surface. To the best of our knowledge, there is no known general solution to this problem. Indeed, a solution is only available for a few examples: planes, spheres, or cylinders. Generally, the characterization of such curves, both in Euclidean ($E^3$) and in Lorentz-Minkowski ($E_1^3$) spaces, involves an ODE relating curvature and torsion. However, by equipping a curve with a relatively parallel moving frame, Bishop was able to characterize spherical curves in $E^3$ through a linear equation relating the coefficients which dictate the frame motion. Here we apply these ideas to surfaces that are implicitly defined by a smooth function, $\Sigma=F^{-1}(c)$, by reinterpreting the problem in the context of the metric given by the Hessian of $F$, which is not always positive definite. So, we are naturally led to the study of curves in $E_1^3$. We develop a systematic approach to the construction of Bishop frames by exploiting the structure of the normal planes induced by the casual character of the curve, present a complete characterization of spherical curves in $E_1^3$, and apply it to characterize curves that belong to a non-degenerate Euclidean quadric. We also interpret the casual character that a curve may assume when we pass from $E^3$ to $E_1^3$ and finally establish a criterion for a curve to lie on a level surface of a smooth function, which reduces to a linear equation when the Hessian is constant. 
\keywords{Moving frame \and Curve on surface \and Spherical curve \and Level surface \and Euclidean space \and Lorentz-Minkowski space}
\subclass{53A04 \and 53A05 \and 53B30}
\end{abstract}

\section{Introduction}
\label{intro}

The study of curves is an important chapter of geometry. Besides its intrinsic importance, curves also play a major role in the analysis of surfaces, and manifolds in general. In particular, the consideration of curves that belong to a given surface, such as planar or spherical curves \cite{ArnoldRMS1995,ChmutovOregon2006,LinJDG1996}, may be of great value. In this respect an interesting problem is {\it``how can we characterize those (spatial) curves that belong to a certain surface $\Sigma$?"}. Despite the simplicity to formulate the problem, there is no known general solution to it except for a few cases: namely, when $\Sigma$ is a plane \cite{Struik}, a sphere \cite{WongMonatshMath,Struik}, or a cylinder \cite{StarostinMonatshMath}. The solution for planar curves is quite trivial once we introduce the Frenet frame: the torsion must vanish. On the other hand, the characterization for spherical curves generally involves an ODE relating the curvature function and the torsion \cite{WongMonatshMath}, while the characterization for cylindrical curves involves a system of algebro-differential equations \cite{StarostinMonatshMath}. In the 70's, through the idea of equipping a curve with a relatively parallel moving frame, Bishop was able to characterize spherical curves through a linear equation relating the coefficients that dictate the frame motion \cite{BishopMonthly}. The coefficients of such a Bishop frame admit a simple geometric interpretation and, besides its impact on the study of spherical curves, a Bishop frame also has the advantage of being globally defined even if a curve has points of zero curvature \cite{BishopMonthly}. Naturally, it also finds applications in problems which make use of frames along curves, such as in rotation-minimizing frames in rigid body dynamics \cite{FaroukiCAGD2014}, computer graphics and visualization \cite{HansonTechrep1995}, robotics \cite{WebsterIJRR2010}, quantum waveguides \cite{HaagAHP2015}, integrable systems \cite{SandersMMJ2003}, and also in mathematical biology in the study of DNA \cite{ChirikjianBST2013,ClauvelinJCTC2012} and protein folding \cite{HuPRE2011}, just to name a few. 

In the quest of spherical curves we should not restrict ourselves to the context of an Euclidean ambient space, $(E^3,\langle\cdot,\cdot\rangle\,)$. Indeed, we can consider the more general setting of a Lorentz-Minkowski space, $(E_1^3,(\cdot,\cdot)\,)$, where one has to deal with three types of spheres: pseudo-spheres $\mathbb{S}_1^2(P;r)=F_P^{-1}(r^2)$; pseudo-hyperbolic spaces $\mathbb{H}_0^2(P;r)=F_P^{-1}(-r^2)$; and light-cones $\mathcal{C}^2(P)=F_P^{-1}(0)$, where $F_P(x)=(x-P,x-P)$ and $(\cdot,\cdot)$ has index 1. Indeed, it is possible to find characterizations of some classes of spherical curves scattered among a few papers: pseudo-spherical \cite{BektasBMMS1998,IlarslamJII-PP2003,PekmenMM1999,Petrovic-TorgasevMM2000,Petrovic-TorgasevMV2001} and pseudo-hyperbolic curves \cite{IlarslamJII-PP2003,Petrovic-TorgasevKJM2000} via Frenet frame; and also curves on light-cones \cite{ErdoganJST2009,LiuRM2011,LiuJG2016} by exploiting conformal invariants and the concept of cone curvature \cite{LiuBAG2004}. It is also possible to find constructions of Bishop frames on curves in $E_1^3$ for spacelike curves \cite{BukcuCFSUA2008,BukcuSJAM2010,LowJGSP2012,OzdemirMJMS2008} with a non-lightlike normal, and timelike curves \cite{KaracanSDUJS2008,LowJGSP2012,OzdemirMJMS2008}, along with several characterizations of spherical curves through a linear equation via Bishop frames \cite{BukcuCFSUA2008,BukcuSJAM2010,KaracanSDUJS2008,OzdemirMJMS2008}. All the above mentioned studies in $E_1^3$ have in common that much attention is paid on the possible combinations of casual characters of the tangent and normal vectors, which makes necessary the consideration of several instances of the investigation of Bishop frames and spherical curves. Moreover, none of them take into account the possibility of a lightlike tangent or a lightlike normal. Naturally, this reflects in the incompleteness of the available characterizations of spherical curves in $E_1^3$.

Here we apply these ideas in order to characterize those spatial curves that belong to surfaces implicitly defined by a smooth function, $\Sigma=F^{-1}(c)$, by reinterpreting the problem in the new geometric setting of an inner product induced by the Hessian, $\mbox{Hess}\,F=\partial^2F/\partial x^i\partial x^j$. Although simple, this idea will prove to be very useful. Moreover, since a Hessian may fail to be positive definite, one is naturally led to the study of the differential geometry of curves in Lorentz-Minkowski spaces. In this work, we then present a systematic approach to moving frames on curves in $E_1^3$. The turning point is that one should exploit the casual character of the tangent vector and the induced casual character on the normal plane only. In this way, we are able to furnish a systematic approach to the construction of Bishop frames in $E_1^3$. This formalism allows us to give a complete characterization of spherical curves in $E_1^3$. Finally, we present a necessary and sufficient criterion for a curve to lie on a level surface of a smooth function. More precisely, we present a functional relationship involving the coefficients of a Bishop frame with respect to the Hessian metric along a curve on $\Sigma=F^{-1}(c)$, which reduces to a linear relation when $\mbox{Hess}\,F$ is constant. In this last case, we are able to characterize spatial curves that belong to a given non-degenerate Euclidean quadric $\mathcal{Q}=\{x:\langle B(x-P),(x-P)\rangle=\rho\}$, $\rho\in\mathbb{R}$ constant, by using $(\cdot,\cdot)=\langle B\cdot,\cdot\rangle$. We also furnish an interpretation for the casual character that a curve may assume when we pass from $E^3$ to $E_1^3$, which also allows us to understand why certain types of curves do not exist on a given quadric or on a given Lorentzian sphere, if we reinterpret the problem from $E_1^3$ in $E^3$. To the best of our knowledge, this is the first time that this characterization problem is considered in a general context.

This work is organized as follows. In Section 2 we review the construction of relatively parallel moving frames in Euclidean space according to Bishop. Section 3 is devoted to moving frames in Lorentz-Minkowski spaces: subsection 3.1 to Frenet frames in $E_1^3$; subsections 3.2 and 3.3 to Bishop frames along space- and timelike curves and their geometric interpretations, respectively; and, subsection 3.4, to null frames along lightlike curves. In Section 4 we characterize spherical curves in Lorentz-Minkowski spaces, i.e., curves on pseudo-spheres, pseudo-hyperbolic spaces, and light-cones. In Section 5 we characterize curves on a non-degenerate Euclidean quadric. In Section 6 we present a characterization of curves that lie on a regular level surface. And finally, in Section 7 we present our conclusions along with some open problems and directions of future research. 

\section{Moving frames on curves in $E^3$}
\label{sec:MovingFrameCurves}
 
Let us denote by $E^3$ the $3d$ Euclidean space, i.e., $\mathbb{R}^3$ equipped with the standard metric $\langle\cdot,\cdot\rangle$. Given a regular curve $\alpha:I\rightarrow E^3$ parametrized by arc-length, the usual way to introduce a moving frame along it is by means of the Frenet frame $\{\mathbf{t},\mathbf{n},\mathbf{b}\}$ \cite{Struik}. However, we can also consider any other adapted orthonormal moving frame $\{\mathbf{e}_0(s),\mathbf{e}_1(s),\mathbf{e}_2(s)\}$ along $\alpha(s)$, i.e., $\mathbf{e}_0\propto \mathbf{t}$ and  $\langle\mathbf{e}_i,\mathbf{e}_j\rangle=\delta_{ij}$. The equation of motion of such a moving frame is given by a skew-symmetric $3\times3$ matrix. For the Frenet frame one of the entries of this matrix is zero and the other two are the curvature function $\kappa$ and the torsion $\tau$:
\begin{equation}
\frac{{\rm d}}{{\rm d}s}\left(
\begin{array}{c}
\mathbf{t}\\
\mathbf{n}\\
\mathbf{b}\\
\end{array}
\right)=\left(
\begin{array}{ccc}
0 & \kappa & 0\\
-\kappa & 0 & \tau\\
0 & -\tau & 0\\ 
\end{array}
\right)\left(
\begin{array}{c}
\mathbf{t}\\
\mathbf{n}\\
\mathbf{b}\\
\end{array}
\right).\label{eq::FrenetEqs}
\end{equation}

By introducing the notion of a relatively parallel vector field, Bishop considered a moving frame $\{\mathbf{t},\mathbf{n}_1,\mathbf{n}_2\}$, where $\mathbf{n}_i$ are normal vectors to the unit tangent $\mathbf{t}$, whose equation of motion is \cite{BishopMonthly} 
\begin{equation}
\frac{{\rm d}}{{\rm d}s}\left(
\begin{array}{c}
\mathbf{t}\\
\mathbf{n}_1\\
\mathbf{n}_2\\
\end{array}
\right)=\left(
\begin{array}{ccc}
0 & \kappa_{1} & \kappa_{2}\\
-\kappa_{1} & 0 & 0\\
-\kappa_{2} & 0 & 0\\ 
\end{array}
\right)\left(
\begin{array}{c}
\mathbf{t}\\
\mathbf{n}_1\\
\mathbf{n}_2\\
\end{array}
\right).\label{eq::BishopEqs}
\end{equation}

The coefficients $\kappa_1$ and $\kappa_2$ relate with the curvature function and torsion according to \cite{BishopMonthly}
\begin{equation}
\left\{
\begin{array}{c}
\kappa_1 = \kappa\cos\theta\\
\kappa_2 = \kappa\sin\theta\\
\theta'= \tau\\
\end{array}
\right..
\end{equation}
\begin{remark}
A vector field $\mathbf{e}(s)$ along $\alpha(s)$ is {\it relatively parallel} if the derivative of its normal component, $\mathbf{e}^{\perp}$, is a multiple of the unit tangent vector, i.e., ${\rm d}\mathbf{e}^{\perp}/{\rm d}s=\eta(s)\mathbf{t}(s)$, and the tangent component is a constant multiple of $\mathbf{t}$ \cite{BishopMonthly}.
\end{remark}
\begin{remark}
Such a frame may be also named {\it rotation minimizing frame}, since $\mathbf{n}_i$ does not rotate around $\mathbf{t}$. In addition, it can be proved that $\mathbf{n}_i$ is parallel transported along $\alpha(s)$ with respect to the normal connection of the curve \cite{Etayo2016}. Observe that for a closed curve, $\alpha(s_i)=\alpha(s_f)$, $\mathbf{n}_1(s_i)$ will differ from $\mathbf{n}_1(s_f)$, by an angular amount of $\Delta \theta = \int_{s_i}^{s_f} \tau(x){\rm d}x$.
\end{remark}

An advantage of such a relatively parallel moving frame, or {\it Bishop frame} for short\footnote{This frame has been independently discovered several times \cite{DaCostaPRA1981,TangIEEE1970}, e.g., in the physics literature it is sometimes named as the Tang frame. However, Bishop seems to be the first to exploit the geometric implications of such frames.}, is that it can be globally defined even if the curve is degenerate, i.e., if the curvature $\kappa$ vanishes at some points \cite{BishopMonthly}. Furthermore, it also allows for a simple characterization of spherical curves:
\begin{theorem}[Bishop \cite{BishopMonthly}]
A $C^2$ regular curve lies on a sphere if and only if its normal development, i.e., the curve $(\kappa_1(s),\kappa_2(s))$, lies on a line not passing through the origin. Moreover, the distance of this line from the origin, $d$, and the radius of the sphere, $r$, are reciprocals: $r=d^{-1}$.
\label{theo:BishopCharacSpherericalCurves} 
\end{theorem}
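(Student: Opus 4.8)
The plan is to work entirely in the Bishop frame and to exploit its defining feature that $\mathbf{n}_1' = -\kappa_1\mathbf{t}$ and $\mathbf{n}_2' = -\kappa_2\mathbf{t}$ have no component along the normal plane. Suppose first that $\alpha$ lies on a sphere of center $P$ and radius $r$, so that $\langle\alpha-P,\alpha-P\rangle = r^2$. Expanding in the orthonormal Bishop frame, $\alpha-P = a_0\mathbf{t} + a_1\mathbf{n}_1 + a_2\mathbf{n}_2$ with $a_0 = \langle\alpha-P,\mathbf{t}\rangle$, $a_1 = \langle\alpha-P,\mathbf{n}_1\rangle$, $a_2 = \langle\alpha-P,\mathbf{n}_2\rangle$, I would first differentiate the sphere condition to obtain $\langle\alpha-P,\mathbf{t}\rangle = 0$, i.e.\ $a_0\equiv 0$. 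Thus the radial vector lies in the normal plane of the curve.

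The crucial step, and the whole reason for using the Bishop frame rather than the Frenet frame, is that the normal coefficients are then \emph{constant}. Using $a_0=0$ and the Bishop equations,
\begin{equation*}
a_1' = \langle\mathbf{t},\mathbf{n}_1\rangle + \langle\alpha-P,-\kappa_1\mathbf{t}\rangle = -\kappa_1 a_0 = 0,
\end{equation*}
and likewise $a_2'=0$. To produce the line I would differentiate $a_0 = \langle\alpha-P,\mathbf{t}\rangle = 0$ once more, obtaining $0 = 1 + \langle\alpha-P,\kappa_1\mathbf{n}_1+\kappa_2\mathbf{n}_2\rangle = 1 + a_1\kappa_1 + a_2\kappa_2$. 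Hence the normal development satisfies $a_1\kappa_1(s) + a_2\kappa_2(s) = -1$ for all $s$, which is exactly a line in the $(\kappa_1,\kappa_2)$-plane not passing through the origin. Since $a_0=0$, we also have $r^2 = a_1^2 + a_2^2$, while the distance from the origin to the line $a_1 x + a_2 y + 1 = 0$ is $d = (a_1^2+a_2^2)^{-1/2} = r^{-1}$, giving the claimed reciprocity.

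For the converse, assume the normal development lies on a line not through the origin; after rescaling its equation we may write it as $a_1\kappa_1 + a_2\kappa_2 = -1$ with constants $a_1,a_2$ not both zero. I would then guess the center: define $P(s) = \alpha(s) - a_1\mathbf{n}_1(s) - a_2\mathbf{n}_2(s)$ and compute, via the Bishop equations,
\begin{equation*}
P' = \mathbf{t} + (a_1\kappa_1 + a_2\kappa_2)\mathbf{t} = (1 + a_1\kappa_1 + a_2\kappa_2)\mathbf{t} = 0.
\end{equation*}
Thus $P$ is a fixed point, and $\langle\alpha-P,\alpha-P\rangle = a_1^2 + a_2^2$ is constant, so $\alpha$ lies on the sphere of radius $\sqrt{a_1^2+a_2^2}$ centered at $P$, with the same reciprocal relation between radius and distance.

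The argument is short and the computations are routine, so there is no serious analytic obstacle. The one point requiring care, and the conceptual heart of the proof, is recognizing that the constancy of $a_1,a_2$ rests entirely on the vanishing of the normal-to-normal entries of the Bishop structure matrix; with the Frenet frame the corresponding coefficients would fail to be constant and one would be driven to the usual second-order ODE relating $\kappa$ and $\tau$. In the converse direction the only nontrivial move is manufacturing the candidate center $P(s)$ from the line coefficients, after which its constancy is immediate from the line equation.
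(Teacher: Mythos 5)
Your proof is correct and follows exactly the argument the paper itself uses for its Lorentzian generalization (Theorem \ref{theo::characSpaceAndLightCurves}): differentiate the sphere condition to place $\alpha-P$ in the normal plane, use the vanishing normal-to-normal entries of the Bishop matrix to show $a_1,a_2$ are constant, differentiate once more to obtain the line $1+a_1\kappa_1+a_2\kappa_2=0$, and in the converse manufacture the center $P=\alpha-a_1\mathbf{n}_1-a_2\mathbf{n}_2$ and check $P'=0$. The reciprocity $r=d^{-1}$ also comes out just as in the paper, so there is nothing to add.
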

\begin{remark}
Straight lines passing through the origin characterize planar curves which are not spherical \cite{BishopMonthly}.
\end{remark}
Finally, Bishop frames are not uniquely defined. Indeed, any rotation of $\mathbf{n}_1$ and $\mathbf{n}_2$ still gives two relatively parallel fields, i.e., there is an ambiguity associated with the group $SO(2)$ acting on the normal plane. However, the coefficients still determine a curve up to rigid motions \cite{BishopMonthly}. Moreover, $\kappa$-constant curves are represented in the normal development plane by circles centered at the origin with radius $\kappa$ \cite{BishopMonthly}, which can be seen as the orbits of the symmetry group $SO(2)$.

In the following we shall extend this formalism in order to present a way of building Bishop frames along curves in $E_1^3$ and then apply it to furnish a unified approach to the characterization of spherical curves in $E_1^3$ (i.e., curves on pseudo-spheres, pseudo-hyperbolic spaces, and light-cones), curves on quadrics in $E^3$, and finally characterize curves that lie on level surfaces of a smooth function by reinterpreting the problem in a new geometric setting.

\section{Moving frames on curves in $E_{1}^3$}
\label{sec:MovingFrameCurvesE3_1}

Let us denote by $E_{1}^3$ the vector space $\mathbb{R}^3$ equipped with a pseudo-metric $(\cdot,\cdot)$ of index $1$. In fact, the concepts below, and the construction of Bishop-like frames as well, are still valid in the context of a 3-dimensional semi-Riemannian manifold \cite{ONeill}, but to help intuition, the reader may keep in mind the particular setting of $\mathbb{R}^3$ equipped with the standard Minkowski metric, i.e. $(x,y)=x_1y_1+x_2y_2-x_3y_3$. Naturally, in a more general context, the derivative of a vector field along a curve should be understood as a covariant derivative. 

Before discussing the moving frame method on curves in $E_1^3$, let us introduce some terminology and geometric properties associated with $E_1^3$ (for more details, we refer to \cite{LopesIEJG2014,ONeill}).

One property that makes the geometry in Lorentz-Minkowski spaces $E_1^3$ more difficult and richer than the geometry in $E^3$ is that curves and vector subspaces may assume different casual characters:
\begin{definition}
A vector $v\in E_1^3$ assumes one of the following {\it casual characters}:
\begin{enumerate}[(a)]
\item $v$ is {\it spacelike}, if $(v,v)>0$ or $v=0$;
\item $v$ is {\it timelike}, if $(v,v)<0$;
\item $v$ is {\it lightlike}, if $(v,v)=0$ and $v\not=0$.
\end{enumerate}
\end{definition}

The inner product $(\cdot,\cdot)$ induces a pseudo-norm defined by $\Vert x\Vert=\sqrt{\vert(x,x)\vert}$. Given a vector subspace $U\subseteq\mathbb{R}^3$, we define the orthogonal complement $U^{\perp}$ in the usual way: $U^{\perp}=\{v\in E_1^3:\forall\,u\in U,\,(v,u)=0\}$. Moreover, we can consider the restriction of $(\cdot,\cdot)$ to $U$, $(\cdot,\cdot)|_{U}$.
\begin{definition}
Let $U$ be a vector subspace, then
\begin{enumerate}[(a)]
\item $U$ is {\it spacelike} if $(\cdot,\cdot)|_{U}$ is positive definite;
\item $U$ is {\it timelike} if $(\cdot,\cdot)|_{U}$ has index 1;
\item $U$ is {\it lightlike} if $(\cdot,\cdot)|_{U}$ is degenerate.
\end{enumerate}
\end{definition}
We have the following useful properties related to the casual characters of vector subspaces:
\begin{proposition}
Let $U\subseteq E_1^3$ be a vector subspace. Then,
\begin{enumerate}[(i)]
\item $\dim U^{\perp} = 3-\dim U$ and $(U^{\perp})^{\perp}=U$;
\item $U$ is lightlike if and only if $U^{\perp}$ is lightlike;
\item $U$ is spacelike (timelike) if and only if $U^{\perp}$ is timelike (spacelike).
\item $U$ is lightlike if and only if $U$ contains a lightlike vector but not a timelike one. Moreover, $U$ admits an orthogonal basis formed  by a lightlike and a spacelike vectors.
\end{enumerate}
\end{proposition}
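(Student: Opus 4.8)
The plan is to establish (i) by pure linear algebra for a nondegenerate form and then bootstrap the causal statements (ii)--(iv) from it. First I would use that $(\cdot,\cdot)$ is nondegenerate on $E_1^3$, so that $v\mapsto(v,\cdot)$ is an isomorphism onto the dual space under which $U^{\perp}$ becomes the annihilator of $U$; this immediately gives $\dim U^{\perp}=3-\dim U$. Applying the identity twice yields $\dim(U^{\perp})^{\perp}=\dim U$, and since $U\subseteq(U^{\perp})^{\perp}$ always holds, equality of dimensions forces $(U^{\perp})^{\perp}=U$.

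For (ii) the key observation I would record is that $U$ is lightlike precisely when its radical $U\cap U^{\perp}$ is nonzero, because a nonzero vector annihilating all of $U$ is exactly a nonzero vector in the kernel of $(\cdot,\cdot)|_{U}$. Using $(U^{\perp})^{\perp}=U$ from (i), the radical of $U^{\perp}$ equals $U^{\perp}\cap(U^{\perp})^{\perp}=U^{\perp}\cap U$, the very same subspace, so $U$ is lightlike iff $U^{\perp}$ is. For (iii), if $U$ is spacelike or timelike it is nondegenerate, hence $U\cap U^{\perp}=0$, and by (i) we get the orthogonal splitting $E_1^3=U\oplus U^{\perp}$; by (ii) the complement is nondegenerate too. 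Then I would invoke additivity of the index over orthogonal direct sums: since the total index is $1$, and spacelike means index $0$ while timelike means index $1$, we have $\mathrm{index}(U)=0\Leftrightarrow\mathrm{index}(U^{\perp})=1$, which is exactly the claim.

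For (iv), in the forward direction I would pick $0\neq u\in U\cap U^{\perp}$; then $(u,u)=0$, so $u$ is a lightlike vector of $U$. To see $U$ has no timelike vector, suppose $t\in U$ were timelike; then $\langle t\rangle$ is timelike, so by (iii) $\langle t\rangle^{\perp}$ is spacelike, yet $u\perp t$ places the nonzero $u$ in $\langle t\rangle^{\perp}$ and forces $(u,u)>0$, a contradiction. Conversely, if $U$ is nondegenerate it is either spacelike, and then has no nonzero lightlike vector, or timelike of index $1$, and then contains a timelike vector; both cases contradict the hypotheses, so $U$ must be lightlike. For the orthogonal basis (the relevant case being $\dim U=2$), I would note that $U\cap U^{\perp}$ is nonzero and totally isotropic; since a totally isotropic subspace of $E_1^3$ has dimension at most the Witt index $1$, it equals $\langle u\rangle$ with $u$ lightlike. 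Completing to a basis $\{u,v\}$ of $U$ gives $(u,v)=0$ and $(v,v)\neq0$ (otherwise $\langle u,v\rangle$ would be a $2$-dimensional totally isotropic subspace), while $(v,v)<0$ is ruled out by the absence of timelike vectors; hence $(v,v)>0$ and $\{u,v\}$ is the desired lightlike--spacelike orthogonal basis.

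The hard part will be the structural statements in (iv), namely pinning down that the radical of a lightlike plane is exactly one-dimensional and that a complementary direction is necessarily spacelike. The cleanest tool is the dimension bound on totally isotropic subspaces coming from the Witt index being $1$; I would either cite it from \cite{ONeill} or prove it directly by fixing a splitting $E_1^3=\mathrm{span}(e_1,e_2)\oplus\mathrm{span}(e_3)$ and applying Cauchy--Schwarz to the spacelike components of two null vectors to conclude they must be proportional. Everything else is careful bookkeeping of dimensions and indices already licensed by (i)--(iii).
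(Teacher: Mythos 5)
Your proof is correct. Note, however, that the paper itself offers no proof of this proposition: it is stated as a collection of standard facts about subspaces of a Lorentzian vector space, with the reader referred to L\'opez and O'Neill. So there is nothing in the paper to compare against line by line; what you have written is essentially the textbook argument from those references, organized cleanly. The logical skeleton is sound: nondegeneracy of the ambient form gives (i) via the annihilator; the observation that ``lightlike'' means the radical $U\cap U^{\perp}$ is nonzero, combined with $(U^{\perp})^{\perp}=U$, gives (ii) for free; the orthogonal splitting $E_1^3=U\oplus U^{\perp}$ plus additivity of the index gives (iii); and the Witt-index bound (two orthogonal null vectors are proportional, provable by your Cauchy--Schwarz argument on the spacelike components) pins down the one-dimensional radical needed for the basis claim in (iv). Two small points of bookkeeping: your step ``spacelike or timelike implies nondegenerate'' silently uses the standard convention that \emph{timelike} means nondegenerate of index $1$, which is how the paper's trichotomy must be read for (b) and (c) to be exclusive; and the ``moreover'' clause about a lightlike--spacelike orthogonal basis only makes sense for $\dim U=2$, which you correctly identify as the relevant case (a lightlike line contains no spacelike vector at all). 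Neither point is a gap, just worth stating explicitly if this were written out in full.
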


Given two vectors $u,v\in E_1^3$, the Lorentzian vector product, denoted by $u\times v$, is the only vector that satisfies
\begin{equation}
\forall\,w\in E_1^3,\,(u\times v,w)=\det(u,v,w),\label{eq::LorentzVectorProd}
\end{equation}
where the columns of $(u,v,w)$ are formed by the entries of $u,v$, and $w$.

From these definitions, we say that a curve $\alpha:I\to E_1^3$ is spacelike, timelike, or lightlike, if its velocity vector $\alpha'$ is spacelike, timelike, or lightlike, respectively. Analogously, we say that a surface is spacelike, timelike, or lightlike, if its tangent planes are spacelike, timelike, or lightlike, respectively.

If a curve is lightlike we can not define an arc-length parameter (in $E^3$ this is always possible). In this case, one must introduce the notion of a {\it pseudo arc-length parameter}, i.e., a parameter $s$ such that $(\alpha''(s),\alpha''(s))=1$. More precisely, if $\alpha$ is a lightlike curve and $(\alpha'',\alpha'')\not=0$ (otherwise $\alpha''$ and $\alpha'$ will be linearly dependent and the curve is a straight line), we define the {\it pseudo arc-length parameter} as
\begin{equation}
s = \int_a^t\Vert\alpha''(u)\Vert\,{\rm d}u\,.\label{eq:PseudoArclength}
\end{equation}
On the other hand, if $\alpha$ is not a lightlike curve, then the {\it arc-length parameter} is defined as usual
\begin{equation}
s = \int_a^t\Vert\alpha'(u)\Vert\,{\rm d}u\,.\label{eq:arclength}
\end{equation}
In the following we will assume every curve to be parametrized by the arc-length or pseudo arc-length parameter.

\subsection{Frenet frame in $E_1^3$}

The study of the local properties of a curve $\alpha\subset E_1^3$ in a Frenet frame fashion becomes quite cumbersome due to the various possibility for the casual characters of the tangent and its derivative. In essence, there is a construction for each combination of the casual characters of $\mathbf{t}$ and $\mathbf{t}'$.

Let $\mathbf{t}(s)=\alpha'(s)$ be the (unit) tangent and $s$ the arc- or pseudo arc-length parameter. If $\mathbf{t}'$ is not a lightlike vector, let $\mathbf{n}=\mathbf{t}'/\Vert\mathbf{t}'\Vert$ be the normal vector. We shall denote by $\epsilon=(\mathbf{t},\mathbf{t})$ and $\eta=(\mathbf{n},\mathbf{n})$ the parameters that enclose the casual character of the tangent and normal vectors. If $\mathbf{t}$ and $\mathbf{n}$ are not lightlike, then
\begin{equation}
\frac{{\rm d}}{{\rm d}s}\left(
\begin{array}{c}
\mathbf{t}\\
\mathbf{n}\\
\mathbf{b}\\
\end{array}
\right)=\left(
\begin{array}{ccc}
0 & \eta\,\kappa & 0\\
-\epsilon\,\kappa & 0 & -\epsilon\eta\,\tau\\
0 & -\eta\,\tau & 0\\ 
\end{array}
\right)\left(
\begin{array}{c}
\mathbf{t}\\
\mathbf{n}\\
\mathbf{b}\\
\end{array}
\right)=\left(
\begin{array}{ccc}
0 & \kappa & 0\\
-\kappa & 0 & \tau\\
0 & -\tau & 0\\ 
\end{array}
\right)E_{\mathbf{t},\mathbf{n},\mathbf{b}}\left(
\begin{array}{c}
\mathbf{t}\\
\mathbf{n}\\
\mathbf{b}\\
\end{array}
\right),\label{eq::FrenetEqsInE13}
\end{equation}
where $\mathbf{b}=\mathbf{t}\times\mathbf{n}$, and $\kappa = (\mathbf{t}',\mathbf{n})$ and $\tau=(\mathbf{n}',\mathbf{b})$ are the curvature function and torsion of $\alpha$, respectively\footnote{Our definition for $\kappa$ is slightly different from that of L\'opez \cite{LopesIEJG2014}. Indeed, despite the fact that the definition is formally identical to the Euclidean version, our $\kappa$ is a signaled curvature and its sign encloses the casual character of the curve in a natural manner.}. Here $E_{\mathbf{t},\mathbf{n},\mathbf{b}}=\mbox{diag}(\epsilon,\eta,-\epsilon\eta)=[(\mathbf{e}_i,\mathbf{e}_j)]_{ij}$ denotes the matrix associated with the frame $\{\mathbf{e}_0=\mathbf{t},\mathbf{e}_1=\mathbf{n},\mathbf{e}_2=\mathbf{b}\}$.

If $\mathbf{t}$ is spacelike and $\mathbf{t}'$ is lightlike, we define $\mathbf{n}=\mathbf{t}'$, while $\mathbf{b}$ is the unique lightlike vector orthonormal to $\mathbf{t}$ that satisfies $(\mathbf{n},\mathbf{b})=-1$. The Frenet equations are 
\begin{equation}
\frac{{\rm d}}{{\rm d}s}\left(
\begin{array}{c}
\mathbf{t}\\
\mathbf{n}\\
\mathbf{b}\\
\end{array}
\right)=\left(
\begin{array}{ccc}
0 & \,1 & 0\\
0 & \,\tau & 0\\
1 & \,0 & -\tau\\ 
\end{array}
\right)\left(
\begin{array}{c}
\mathbf{t}\\
\mathbf{n}\\
\mathbf{b}\\
\end{array}
\right)=E_{\mathbf{t},\mathbf{n},\mathbf{b}}\left(
\begin{array}{ccc}
0 & 1 & 0\\
-1 & 0 & \tau\\
0 & -\tau & 0\\ 
\end{array}
\right)\left(
\begin{array}{c}
\mathbf{t}\\
\mathbf{n}\\
\mathbf{b}\\
\end{array}
\right),\label{eq::FrenetEqsInE13tprimelightlike}
\end{equation}
where $\tau=-(\mathbf{n}',\mathbf{b})$ is the pseudo-torsion. Here $E_{\mathbf{t},\mathbf{n},\mathbf{b}}=[(\mathbf{e}_i,\mathbf{e}_j)]_{ij}$ denotes the matrix associated with the null frame $\{\mathbf{e}_0=\mathbf{t},\mathbf{e}_1=\mathbf{n},\mathbf{e}_2=\mathbf{b}\}$.

Finally, if $\mathbf{t}$ is lightlike, we define $\mathbf{n}=\mathbf{t}'$ (we assume this normal vector to be spacelike, otherwise $\alpha$ is a straight line), while $\mathbf{b}$ is the unique lightlike vector that satisfies $(\mathbf{n},\mathbf{b})=0$ and $(\mathbf{t},\mathbf{b})=-1$. The Frenet equations are then
\begin{equation}
\frac{{\rm d}}{{\rm d}s}\left(
\begin{array}{c}
\mathbf{t}\\
\mathbf{n}\\
\mathbf{b}\\
\end{array}
\right)=\left(
\begin{array}{ccc}
0 & 1 & 0\\
-\tau & 0 & 1\\
0 & -\tau & 0\\ 
\end{array}
\right)\left(
\begin{array}{c}
\mathbf{t}\\
\mathbf{n}\\
\mathbf{b}\\
\end{array}
\right)=\left(
\begin{array}{ccc}
0 & 1 & 0\\
-1 & 0 & \tau\\
0 & -\tau & 0\\ 
\end{array}
\right)E_{\mathbf{t},\mathbf{n},\mathbf{b}}\left(
\begin{array}{c}
\mathbf{t}\\
\mathbf{n}\\
\mathbf{b}\\
\end{array}
\right),\label{eq::FrenetEqsInE13tangentlightlike}
\end{equation}
where $\tau=(\mathbf{n}',\mathbf{b})$ is the pseudo-torsion. Here $E_{\mathbf{t},\mathbf{n},\mathbf{b}}=[(\mathbf{e}_i,\mathbf{e}_j)]_{ij}$ denotes the matrix associated with the null frame $\{\mathbf{e}_0=\mathbf{t},\mathbf{e}_1=\mathbf{n},\mathbf{e}_2=\mathbf{b}\}$.
\begin{remark}
In $E^3$ the coefficient matrix of a Frenet frame is always skew-symmetric. On the other hand, this does not happen in $E_1^3$ \cite{LowJGSP2012}. However, the above expressions show that the coefficient matrix can be obtained from a skew-symmetric matrix through a right-multiplication, or a left one if $\mathbf{t}'$ is lightlike, by the matrix $E_{\mathbf{t},\mathbf{n},\mathbf{b}}=[(\mathbf{e}_i,\mathbf{e}_j)]_{ij}$ associated with the respective Frenet frame $\{\mathbf{e}_0=\mathbf{t},\mathbf{e}_1=\mathbf{n},\mathbf{e}_2=\mathbf{b}\}$ in $E_1^3$. This skew-symmetric matrix is precisely the coefficient matrix that we would obtain for a Frenet frame in $E^3$. Let us mention that when $\mathbf{t}'$ is lightlike it does not mean that the curvature function is $\kappa=1$; a curvature is not well defined for such curves \cite{LopesIEJG2014}.
\end{remark}
\begin{remark}
In the following, when discussing Bishop frames in $E_1^3$ along non-lightlike curves and null frames along lightlike curves, we will see that the coefficient matrix can be obtained from a skew-symmetric matrix (precisely the matrix that we would obtain for a Bishop frame in $E^3$) through a right-multiplication by the matrix associated with a convenient basis.
\end{remark}

\subsection{Relatively parallel moving frames along spacelike or lightlike curves}

A quite complete and systematic approach to the problem of the existence of Bishop-like frames along curves in $E_1^3$ was presented by \"Ozdemir and Ergin \cite{OzdemirMJMS2008}, where they build Bishop-like frames on timelike and spacelike curves with a non-lightlike normal. However, as in the Frenet frame case, they also paid much attention to the casual character of $\mathbf{t}'$. Here, we show that one must exploit the structure of the normal plane inherited from the casual character of $\mathbf{t}$ in order to build a unified treatment of the problem. More precisely, instead of considering the problem for each combination of the casual character of $\mathbf{t}$ and $\mathbf{t}'$, one must pay attention to the symmetry associated with the problem, which is reflected in an ambiguity in the definition of a Bishop frame. The study of moving frames along curves in $E_1^3$ is then divided in three cases only: (i) timelike curves; (ii) spacelike curves; and (iii) lightlike curves. As a direct consequence, the characterization of spherical curves can be split along three Theorems only.

\begin{definition}
A vector field $\mathbf{e}(s)$ along a curve $\alpha:I\to E_1^3$ is {\it relatively parallel} if the derivative of its normal component is a multiple of the unit tangent vector $\mathbf{t}=\alpha'$ and its tangent component is a constant multiple of $\mathbf{t}$.
\end{definition}

Let $\alpha:I\to E_1^3$ be a timelike curve. Since $\mathbf{t}$ is a timelike vector, the normal plane $N_{\alpha(s)}=\mbox{span}\{\mathbf{t}(s)\}^{\perp}$ is spacelike. To prove the existence of relatively parallel moving frames, let $\mathbf{x}_1$ and $\mathbf{x}_2=\mathbf{t}\times\mathbf{x}_1$ be an orthonormal basis of $N_{\alpha}$. The frame $\{\mathbf{t},\mathbf{x}_1,\mathbf{x}_2\}$ satisfies the following equations
\begin{equation}
\frac{{\rm d}}{{\rm d}s}\left(
\begin{array}{c}
\mathbf{t}\\
\mathbf{x}_1\\
\mathbf{x}_2\\
\end{array}
\right)=\left(
\begin{array}{ccc}
0 & p_{01} & p_{02}\\
p_{01} & 0 & p_{12}\\
p_{02} & -p_{12} & 0\\ 
\end{array}
\right)\left(
\begin{array}{c}
\mathbf{t}\\
\mathbf{x}_1\\
\mathbf{x}_2\\
\end{array}
\right)=\left(
\begin{array}{ccc}
0 & p_{01} & p_{02}\\
-p_{01} & 0 & p_{12}\\
-p_{02} & -p_{12} & 0\\ 
\end{array}
\right)E_{\mathbf{t},\mathbf{x}_1,\mathbf{x}_2}\left(
\begin{array}{c}
\mathbf{t}\\
\mathbf{x}_1\\
\mathbf{x}_2\\
\end{array}
\right),
\end{equation}
for some functions $p_{ij}$, where $E_{\mathbf{t},\mathbf{x}_1,\mathbf{x}_2}=[(\mathbf{e}_i,\mathbf{e}_j)]_{ij}$ denotes the matrix associated with the time-oriented frame $\{\mathbf{e}_0=\mathbf{t},\mathbf{e}_k=\mathbf{x}_k\}$. Let $\theta$ be a smooth function such that $\mathbf{x}=L\cos\theta\,\mathbf{x}_1+L\sin\theta\,\mathbf{x}_2$, where $L$ is a constant. Then,
\begin{equation}
\mathbf{x}'=L(p_{01}\cos\theta+p_{02}\sin\theta)\mathbf{t}+L(\theta'+p_{12})(-\sin\theta\mathbf{x}_1+\cos\theta\mathbf{x}_2).
\end{equation}
Thus, it follows that $\mathbf{x}$ is relatively parallel if and only if $\theta'+p_{12}=0$. By the existence of a solution $\theta(s)$ for any initial condition, this shows that relatively parallel vector fields do exist along timelike curves. Observe that Bishop frames are not unique. Indeed, any rotation of the normal vectors still gives two relatively parallel vector fields, i.e., there is an ambiguity associated with the group $SO(2)$.

On the other hand, if $\alpha:I\to E_1^3$ is a spacelike curve, $\mathbf{t}$ is a spacelike vector and then the normal plane $N_{\alpha(s)}=\mbox{span}\{\mathbf{t}(s)\}^{\perp}$ is timelike. In a Frenet frame fashion, the study is divided into three cases, depending on the casual character of $\mathbf{t}'\in N_{\alpha}$, i.e., if $\mathbf{t}'$ is a space-, time-, or lightlike vector. But, if we only take into account the structure of $N_{\alpha}$, this is no longer necessary.

To prove the existence of relatively parallel moving frames along spacelike curves, let $\mathbf{y}_1\in N_{\alpha}$ be a timelike vector and let $\mathbf{y}_2=\mathbf{t}\times\mathbf{y}_1$ be spacelike. Then, the frame $\{\mathbf{t},\mathbf{y}_1,\mathbf{y}_2\}$ is an orthonormal time-oriented basis of $E_1^3$ along $\alpha$. The frame $\{\mathbf{t},\mathbf{y}_1,\mathbf{y}_2\}$ satisfies the following equation of motion
\begin{equation}
\frac{{\rm d}}{{\rm d}s}\left(
\begin{array}{c}
\mathbf{t}\\
\mathbf{y}_1\\
\mathbf{y}_2\\
\end{array}
\right)=\left(
\begin{array}{ccc}
0 & -p_{01} & p_{02}\\
-p_{01} & 0 & p_{12}\\
-p_{02} & p_{12} & 0\\ 
\end{array}
\right)\left(
\begin{array}{c}
\mathbf{t}\\
\mathbf{y}_1\\
\mathbf{y}_2\\
\end{array}
\right)=\left(
\begin{array}{ccc}
0 & p_{01} & p_{02}\\
-p_{01} & 0 & p_{12}\\
-p_{02} & -p_{12} & 0\\ 
\end{array}
\right)E_{\mathbf{t},\mathbf{y}_1,\mathbf{y}_2}\left(
\begin{array}{c}
\mathbf{t}\\
\mathbf{y}_1\\
\mathbf{y}_2\\
\end{array}
\right),
\end{equation}
for some functions $p_{ij}$, where $E_{\mathbf{t},\mathbf{y}_1,\mathbf{y}_2}=[(\mathbf{e}_i,\mathbf{e}_j)]_{ij}$ denotes the matrix associated with the time-oriented frame $\{\mathbf{e}_0=\mathbf{t},\mathbf{e}_k=\mathbf{y}_k\}$. Let $\theta$ be a smooth function such that $\mathbf{y}=L\cosh\theta\,\mathbf{y}_1+L\sinh\theta\,\mathbf{y}_2$, where it is used hyperbolic trigonometric functions because the normal plane is timelike. Then, we have
\begin{equation}
\mathbf{y}'=L(-p_{01}\cosh\theta-p_{02}\sinh\theta)\mathbf{t}+L(\theta'+p_{12})(\sinh\theta\mathbf{y}_1+\cosh\theta\mathbf{y}_2).
\end{equation}
Thus, it follows that $\mathbf{y}$ is relatively parallel if and only if $\theta'+p_{12}=0$. By the existence of a solution $\theta(s)$ for any initial condition, this shows that relatively parallel vector fields do exist along spacelike curves. As in the previous case, observe that Bishop frames are not unique. Indeed, any (hyperbolic) rotation of the normal vectors still gives two relatively parallel vector fields, i.e., there is an ambiguity associated with the group $SO_1(2)$, which is a
component of the symmetry group of a Lorentzian plane $E^2_1$ \cite{LopesIEJG2014,ONeill}. 

When $\mathbf{n}$ has a distinct casual character from that of $\mathbf{n}_1$, then we can not obtain $\mathbf{n},\mathbf{b}$ from a $SO_1(2)$-rotation of $\mathbf{n}_1,\mathbf{n}_2$, i.e., there exists no $ M\in SO_1(2)$ such that $M(\mathbf{n})=\mathbf{n}_1$ and $M(\mathbf{b})=\mathbf{n}_2$. In this case, we must first exchange $\mathbf{n}_1$ and $\mathbf{n}_2$ and then rotate them \cite{OzdemirMJMS2008}. However, we can still read the information about the casual character of $\mathbf{n}$, including the lightlike case, from the ``circles'' of the normal plane, i.e., the orbits of $O_1(2)$, see figure 1 and Proposition \ref{prop::geomNormalDevelopm} below.

Now we put together the above mentioned existence results of relatively parallel vectors on non-lightlike curves. Let $\{\mathbf{n}_1,\mathbf{n}_2\}$ be a basis for $N_{\alpha}$ formed by relatively parallel vectors such that
\begin{equation}
\mathbf{n}'_i(s) = -\epsilon \kappa_i\,\mathbf{t}(s),
\end{equation}
where $\epsilon = (\mathbf{t},\mathbf{t})=\pm1$ and we have defined the Bishop curvatures
\begin{equation}
\kappa_i = (\mathbf{t}',\mathbf{n}_i),\,i=1,2\,.
\end{equation}
Then, defining $\epsilon_1=(\mathbf{n}_1,\mathbf{n}_1)=\pm1$, we can write the following equation of motion
\begin{equation}
\frac{{\rm d}}{{\rm d}s}\left(
\begin{array}{c}
\mathbf{t}\\
\mathbf{n}_1\\
\mathbf{n}_2\\
\end{array}
\right)=\left(
\begin{array}{ccc}
0 & \epsilon_1\kappa_{1} & \kappa_{2}\\
-\epsilon\kappa_{1} & 0 & 0\\
-\epsilon\kappa_{2} & 0 & 0\\ 
\end{array}
\right)\left(
\begin{array}{c}
\mathbf{t}\\
\mathbf{n}_1\\
\mathbf{n}_2\\
\end{array}
\right)=\left(
\begin{array}{ccc}
0 & \kappa_{1} & \kappa_{2}\\
-\kappa_{1} & 0 & 0\\
-\kappa_{2} & 0 & 0\\ 
\end{array}
\right)E_{\mathbf{t},\mathbf{n}_1,\mathbf{n}_2}\left(
\begin{array}{c}
\mathbf{t}\\
\mathbf{n}_1\\
\mathbf{n}_2\\
\end{array}
\right),\label{eq::GenBishopEqsNonlightCurves}
\end{equation}
 where $E_{\mathbf{t},\mathbf{n}_1,\mathbf{n}_2}=[(\mathbf{e}_i,\mathbf{e}_j)]_{ij}$ denotes the matrix associated with the time-oriented frame $\{\mathbf{e}_0=\mathbf{t},\mathbf{e}_k=\mathbf{n}_k\}$. The numbers $\epsilon$ and $\epsilon_1$ determine the casual character of $\mathbf{t}$ and $\mathbf{n}_1$, respectively, and since $\mathbf{n}_2=\mathbf{t}\times\mathbf{n}_1$, we have $\epsilon_2=(\mathbf{n}_2,\mathbf{n}_2)=-\epsilon\epsilon_1=+1$. So, in this case $E_{\mathbf{t},\mathbf{n}_1,\mathbf{n}_2}=\mbox{diag}(\epsilon,\epsilon_1,-\epsilon\epsilon_1)$.

\subsection{Geometry of the normal development of spacelike and timelike curves}

\begin{figure*}[tbp]
\centering
    {\includegraphics[width=0.33\linewidth]{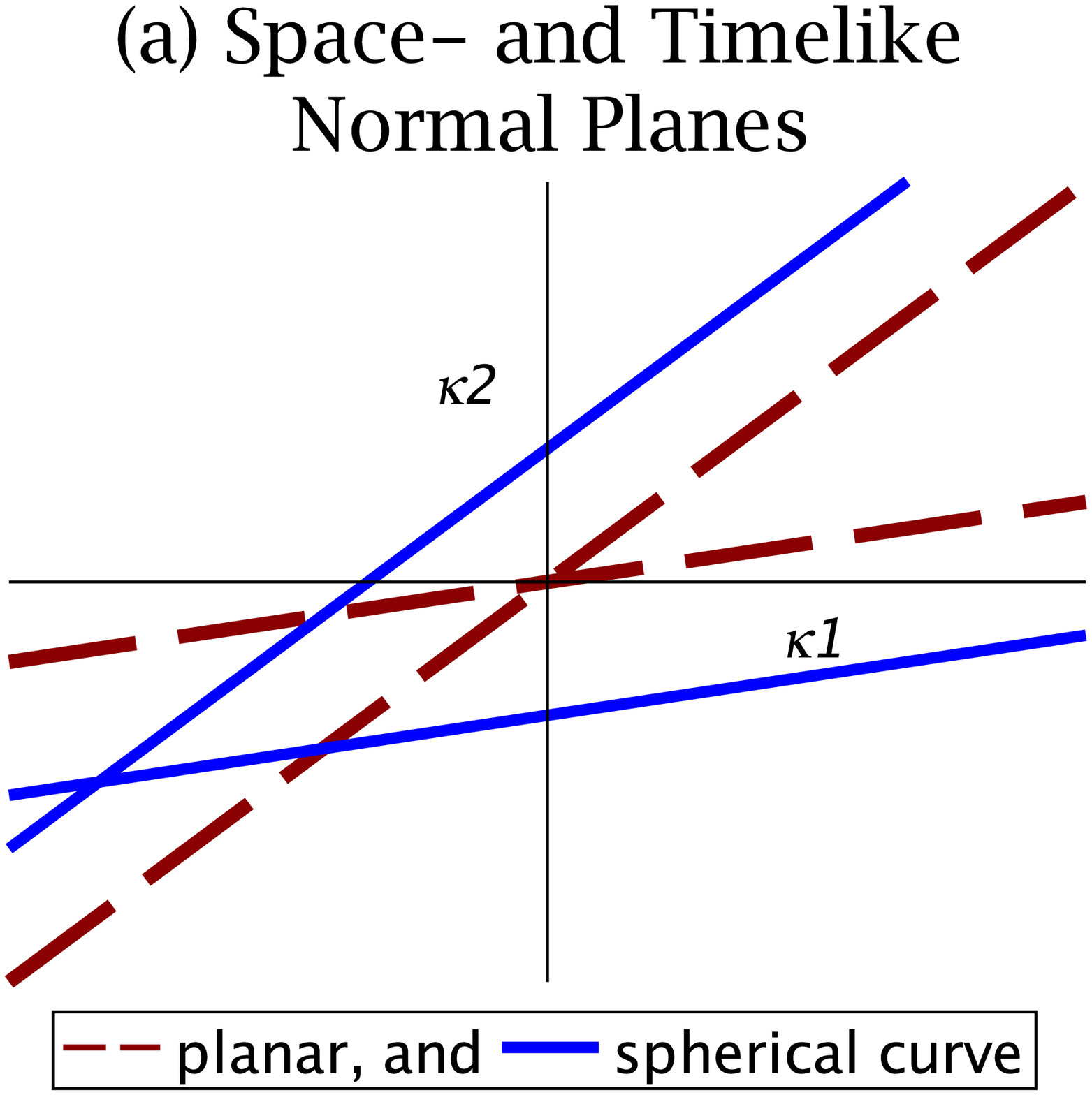}}
    {\includegraphics[width=0.32\linewidth]{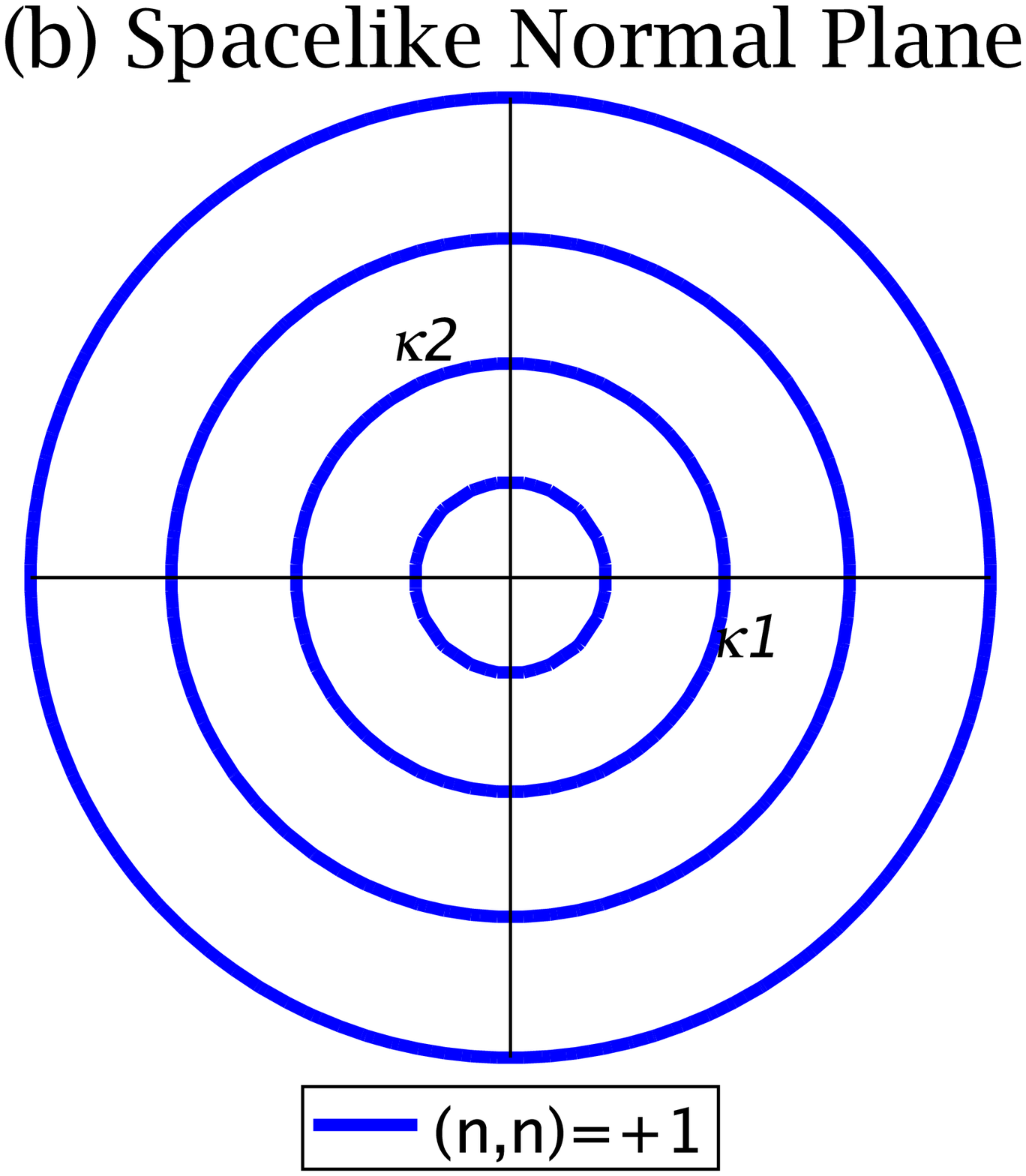}}
    {\includegraphics[width=0.32\linewidth]{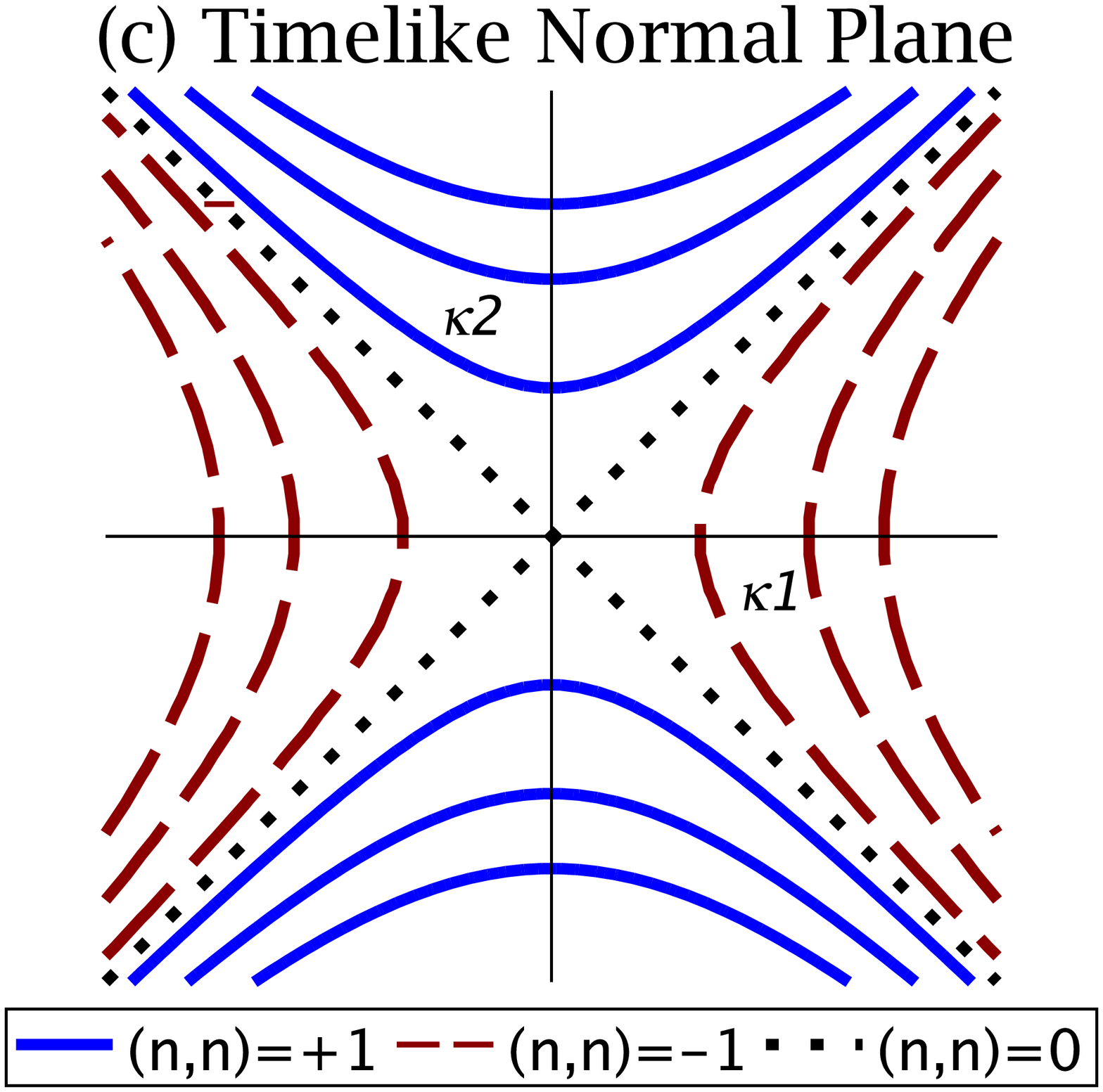}}
          \caption{The geometry of the normal development $(\kappa_1,\kappa_2)$: (a) On a space- or timelike normal plane, lines through the origin (dashed red line) represent planar curves (Proposition \ref{prop::CharacPlaneCurves}), and lines not passing through the origin (solid blue line) represent spherical curves (Section 4); (b) On a spacelike normal plane, circles represent $\kappa$-constant curves; and (c) On a timelike normal plane, hyperbolas represent $\kappa$-constant curves with spacelike normal vector (solid blue line) or timelike normal vector (dashed red line), and the degenerate hyperbola $\kappa_1=\pm \kappa_2$ represents curves with a lightlike normal vector (dotted black line).}
\end{figure*}

The normal development of $\alpha(s)$ is the planar curve $(\kappa_1(s),\kappa_2(s))$. After proving the existence for Bishop moving frames on non-lightlike curves the natural question is how to relate the Bishop curvatures $\kappa_1,\kappa_2$ to the geometry of the curve which defines them.

From the Frenet equations we have
\begin{equation}
\eta\mathbf{n}=\frac{\mathbf{t}'}{\kappa}=\frac{\epsilon_1\kappa_1\mathbf{n}_1+\kappa_2\mathbf{n}_2}{\kappa}\Rightarrow \eta = \epsilon_1\frac{\kappa_1^2}{\kappa^2}+\frac{\kappa_2^2}{\kappa^2}\,,
\end{equation}
where $\eta=(\mathbf{n},\mathbf{n})\in\{-1,0,+1\}$. Then we have the following relations (see figure 1):
\begin{proposition}
For a fixed value of the parameter $s$, the point $(\kappa_1(s),\kappa_2(s))$ lies on a conic. More precisely,
\begin{enumerate}[(a)]
\item If $\mathbf{t}(s)$ is timelike (so $\mathbf{n}(s)$ must be spacelike), then $(\kappa_1(s),\kappa_2(s))$ lies on a circle of radius $\kappa(s)$: $\kappa^2=X^2+Y^2$;
\item If $\mathbf{t}(s)$ is spacelike and $\mathbf{n}(s)$ is timelike (spacelike), then $(\kappa_1(s),\kappa_2(s))$ lies on a hyperbola with foci on the $x$ axis ($y$ axis): $\kappa^2=\pm X^2\mp Y^2$;
\item If $\mathbf{t}(s)$ is spacelike and $\mathbf{n}(s)$ is lightlike, then $(\kappa_1(s),\kappa_2(s))$ lies on the line $X=\pm Y$, which form the asymptotes lines of the hyperbolas in item (b).
\end{enumerate}
\label{prop::geomNormalDevelopm}
\end{proposition}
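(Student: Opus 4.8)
The plan is to reduce everything to a single algebraic identity and then read off each conic from the signs forced by the causal characters. The starting point is the first row of the Bishop equations \eqref{eq::GenBishopEqsNonlightCurves}, which reads $\mathbf{t}'=\epsilon_1\kappa_1\mathbf{n}_1+\kappa_2\mathbf{n}_2$. Taking the inner product of $\mathbf{t}'$ with itself and using the orthogonality $(\mathbf{n}_1,\mathbf{n}_2)=0$ together with $(\mathbf{n}_1,\mathbf{n}_1)=\epsilon_1$ and $(\mathbf{n}_2,\mathbf{n}_2)=\epsilon_2=+1$, and the fact that $\epsilon_1^3=\epsilon_1$, I obtain the master identity
\begin{equation}
(\mathbf{t}',\mathbf{t}')=\epsilon_1\,\kappa_1^2+\kappa_2^2 .
\end{equation}
All three items will follow by substituting the admissible values of $\epsilon_1$ (fixed by the causal character of $\mathbf{t}$) and of $(\mathbf{t}',\mathbf{t}')$ (fixed by the causal character of $\mathbf{t}'$, equivalently of $\mathbf{n}$).

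First I would treat the timelike case. If $\mathbf{t}$ is timelike the normal plane is spacelike, so both $\mathbf{n}_1,\mathbf{n}_2$ are spacelike, $\epsilon_1=+1$, and the identity becomes $(\mathbf{t}',\mathbf{t}')=\kappa_1^2+\kappa_2^2$. Since $\mathbf{n}$ then lies in a spacelike plane it is spacelike, so $(\mathbf{t}',\mathbf{t}')=\kappa^2$ and I recover the circle $\kappa^2=X^2+Y^2$ of item (a). If instead $\mathbf{t}$ is spacelike the normal plane is timelike; with the convention adopted above (namely $\mathbf{n}_1$ timelike, as in the existence proof) we have $\epsilon_1=-1$, and the identity reduces to $(\mathbf{t}',\mathbf{t}')=-\kappa_1^2+\kappa_2^2$.

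For this spacelike case I would sub-divide on the sign $\eta=(\mathbf{n},\mathbf{n})$ of the Frenet normal. When $\mathbf{n}$ is timelike, $(\mathbf{t}',\mathbf{t}')=-\kappa^2$, giving $\kappa^2=\kappa_1^2-\kappa_2^2$, a hyperbola with foci on the $x$ axis; when $\mathbf{n}$ is spacelike, $(\mathbf{t}',\mathbf{t}')=+\kappa^2$, giving $\kappa^2=-\kappa_1^2+\kappa_2^2$, a hyperbola with foci on the $y$ axis; together these establish item (b). Finally, when $\mathbf{n}$ is lightlike the vector $\mathbf{t}'$ is itself lightlike, so $(\mathbf{t}',\mathbf{t}')=0$ and the identity collapses to $\kappa_1^2=\kappa_2^2$, i.e.\ the asymptote lines $X=\pm Y$ of item (c).

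The only genuine subtlety — and hence the main obstacle — is precisely this last case: when $\mathbf{n}$ is lightlike the Frenet curvature $\kappa$ is not defined (cf.\ the case $\mathbf{t}'$ lightlike in \eqref{eq::FrenetEqsInE13tprimelightlike}), so the argument cannot be routed through the quantity $\eta\kappa^2$ as in the preceding items. The remedy is to keep the intrinsic scalar $(\mathbf{t}',\mathbf{t}')$ as the working quantity throughout, rather than $\kappa$; this keeps the master identity meaningful in all three regimes and exhibits the degenerate conic as the natural locus $(\mathbf{t}',\mathbf{t}')=0$ separating the two families of hyperbolas, so that item (c) emerges as the limiting boundary of item (b).
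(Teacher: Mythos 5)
Your proof is correct and follows essentially the same route as the paper: the paper derives the identity $\eta=\epsilon_1\kappa_1^2/\kappa^2+\kappa_2^2/\kappa^2$ from $\eta\mathbf{n}=\mathbf{t}'/\kappa=(\epsilon_1\kappa_1\mathbf{n}_1+\kappa_2\mathbf{n}_2)/\kappa$ and reads off the three conics by substituting the admissible signs of $\epsilon_1$ and $\eta$, which is your master identity $(\mathbf{t}',\mathbf{t}')=\epsilon_1\kappa_1^2+\kappa_2^2$ multiplied through by $\kappa^{-2}$. Your choice to work with $(\mathbf{t}',\mathbf{t}')$ rather than $\eta\kappa^2$ is a small but genuine improvement, since it avoids dividing by the undefined $\kappa$ in the lightlike-normal case (c).
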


\begin{remark}
Observe that $\kappa$-constant curves are precisely the orbits of $O_1(2)$, the symmetry group of a Lorentzian plane.
\end{remark}

\begin{proposition}
Let $\alpha:I\to E_1^3$ be a $C^2$ regular curve which is not spherical. Then, the curve is planar if and only if its normal development $(\kappa_1(s),\kappa_2(s))$ lies on a straight line passing through the origin.
\label{prop::CharacPlaneCurves}
\end{proposition}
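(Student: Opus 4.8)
The plan is to work directly with the Bishop equations of motion \eqref{eq::GenBishopEqsNonlightCurves}, exploiting the fact that a (non-lightlike) curve $\alpha$ is planar precisely when there is a fixed nonzero vector $N$ with $(\alpha(s)-\alpha(s_0),N)=0$ for all $s$, i.e. $\alpha$ lies in the affine plane $\{x:(x,N)=(\alpha(s_0),N)\}$; differentiating this condition gives the equivalent infinitesimal statement $(\mathbf{t},N)=0$ together with the constancy of $N$. The whole argument will then reduce to decomposing such an $N$ in the Bishop frame $\{\mathbf{t},\mathbf{n}_1,\mathbf{n}_2\}$ and reading off what constancy forces on the coefficients. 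Because \eqref{eq::GenBishopEqsNonlightCurves} already packages all admissible casual characters of $\mathbf{t}$ and $\mathbf{n}_1$ through the signs $\epsilon=(\mathbf{t},\mathbf{t})$ and $\epsilon_1=(\mathbf{n}_1,\mathbf{n}_1)$, a single computation will cover the space- and timelike cases simultaneously, with no need to split according to the casual character of $\mathbf{t}'$.

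For the forward implication, I would suppose $\alpha$ is planar with fixed normal $N\neq 0$, and first expand $N=c_0\mathbf{t}+c_1\mathbf{n}_1+c_2\mathbf{n}_2$, noting that pairing with $\mathbf{t}$ gives $0=(N,\mathbf{t})=\epsilon\,c_0$, hence $c_0\equiv 0$ and $N=c_1\mathbf{n}_1+c_2\mathbf{n}_2$ lies in the normal plane. Differentiating and substituting $\mathbf{n}_i'=-\epsilon\kappa_i\mathbf{t}$ from \eqref{eq::GenBishopEqsNonlightCurves} then yields
\begin{equation}
0=N'=c_1'\mathbf{n}_1+c_2'\mathbf{n}_2-\epsilon(c_1\kappa_1+c_2\kappa_2)\mathbf{t}.
\end{equation}
Since $\{\mathbf{t},\mathbf{n}_1,\mathbf{n}_2\}$ is a basis, this forces $c_1,c_2$ to be constant and $c_1\kappa_1+c_2\kappa_2\equiv 0$, with $(c_1,c_2)\neq(0,0)$ because $N\neq 0$. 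Thus the normal development $(\kappa_1,\kappa_2)$ lies on the line $c_1X+c_2Y=0$ through the origin.

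The converse will run the same computation backwards. Assuming $(\kappa_1,\kappa_2)$ lies on a line through the origin, there are constants $(a,b)\neq(0,0)$ with $a\kappa_1+b\kappa_2\equiv 0$; setting $N:=a\mathbf{n}_1+b\mathbf{n}_2$, the Bishop equations give $N'=-\epsilon(a\kappa_1+b\kappa_2)\mathbf{t}=0$, so $N$ is a constant vector, nonzero since $\mathbf{n}_1,\mathbf{n}_2$ are independent. Finally $g(s):=(\alpha(s)-\alpha(s_0),N)$ satisfies $g'=(\mathbf{t},N)=a(\mathbf{t},\mathbf{n}_1)+b(\mathbf{t},\mathbf{n}_2)=0$ and $g(s_0)=0$, hence $g\equiv 0$ and $\alpha$ lies in the plane through $\alpha(s_0)$ with normal $N$.

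The point that needs care, and the reason for the hypothesis that $\alpha$ be non-spherical, is the degenerate case in which the normal development collapses to a single point, i.e. $\kappa_1,\kappa_2$ both constant. Such a point lies on lines through the origin and on lines avoiding it, so it is simultaneously flagged by the planar criterion here and by the spherical criterion of Section 4; geometrically these are exactly the constant-Bishop-curvature curves, which are both planar and spherical. Excluding spherical curves removes this overlap and makes the dichotomy sharp, so that ``line through the origin'' genuinely characterizes planarity. I expect this bookkeeping, rather than any analytic difficulty, to be the only real obstacle: the two implications themselves are immediate from \eqref{eq::GenBishopEqsNonlightCurves} once the constant normal vector is written in the Bishop frame.
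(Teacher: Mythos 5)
Your proof is correct, and while it rests on the same two pillars as the paper's --- decomposing the plane's normal vector in the Bishop frame and invoking $\mathbf{n}_i'=-\epsilon\kappa_i\mathbf{t}$ from Eq. (\ref{eq::GenBishopEqsNonlightCurves}) --- it handles the direction ``planar $\Rightarrow$ line through the origin'' by a genuinely different and cleaner mechanism. The paper differentiates the scalar $(\alpha-\alpha_0,\mathbf{x})$ and lands on the pointwise identity $(a\kappa_1+b\kappa_2)\,(\alpha-\alpha_0,\mathbf{t})=0$, so it must dispose of the alternative $(\alpha-\alpha_0,\mathbf{t})=0$ by showing that it would force the curve to be spherical; this is exactly where the ``not spherical'' hypothesis is consumed (and the paper quietly glosses over the fact that a product vanishing pointwise does not immediately make one factor vanish identically). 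You instead differentiate the constant normal vector $N$ itself and read off $c_1'=c_2'=0$ together with $c_1\kappa_1+c_2\kappa_2=0$ from the linear independence of $\{\mathbf{t},\mathbf{n}_1,\mathbf{n}_2\}$, with no case split at all. As a result your argument never uses the non-sphericity assumption, and your closing paragraph correctly relocates its role: it is not needed for either implication but only to keep the planar and spherical criteria from overlapping on the constant-$(\kappa_1,\kappa_2)$ curves, mirroring Bishop's remark in $E^3$. What your route buys is a slightly stronger statement (the equivalence holds for every space- or timelike curve admitting a Bishop frame) and the elimination of the delicate factor-splitting step; what the paper's route buys is an explicit illustration of how the spherical case of Theorem \ref{theo::characSpaceAndLightCurves} emerges as the obstruction. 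One small point worth making explicit in your write-up: the identification of a plane with $\{x:(x,N)=d\}$, $N\neq0$, is valid even for lightlike planes because $(\cdot,\cdot)$ is non-degenerate on $E_1^3$, and the decomposition $N=c_1\mathbf{n}_1+c_2\mathbf{n}_2$ is available because $\mathbf{t}$ is non-lightlike, so $E_1^3=\mathrm{span}\{\mathbf{t}\}\oplus N_\alpha$; you flag the non-lightlike restriction but it deserves one sentence of justification.
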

\begin{proof}
Suppose $a\,\kappa_1+b\,\kappa_2=0$ for some constants $a$ and $b$. Defining $\mathbf{x}(s)=a\mathbf{n}_1(s)+b\mathbf{n}_2(s)\in N_{\alpha(s)}=\mbox{span}\{\mathbf{t}(s)\}^{\perp}$. It follows that $\mathbf{x}$ is constant and also that
\begin{equation}
(\alpha,\mathbf{x})' = (\alpha,-(a\epsilon\kappa_1+b\epsilon\kappa_2)\mathbf{t})=-\epsilon(\alpha,\mathbf{t})(a\kappa_1+b\kappa_2)=0\,.
\end{equation}
Thus, $(\alpha(s),\mathbf{x})$ is constant and then $(\alpha(s)-\alpha(s_0),\mathbf{x})=0$. So, $\alpha$ is a planar curve.

Conversely, let $\alpha$ be contained on a plane $(\alpha(s)-\alpha(s_0),\mathbf{x})=0$. Since the tangent $\mathbf{t}$ also belongs to this plane, we can write $\mathbf{x}=a\mathbf{n}_1+b\mathbf{n}_2$ for some constants $a,b$. Then,
\begin{equation}
0=(\alpha-\alpha_0,\mathbf{x})' = (\alpha-\alpha_0,-(a\epsilon\kappa_1+b\epsilon\kappa_2)\mathbf{t})=-\epsilon(a\kappa_1+b\kappa_2)\,(\alpha-\alpha_0,\mathbf{t})\,.
\end{equation}
Thus, $a\kappa_1+b\kappa_2=0$ or $(\alpha-\alpha_0,\mathbf{t})=0$. In this last case, the curve would be spherical. Indeed, if it were $(\alpha-\alpha_0,\mathbf{t})=0$, then $\alpha-\alpha_0=b_1\mathbf{n}_1+b_2\mathbf{n}_2$, for some constants $b_1$ and $b_2$, because $b_i=\epsilon_i(\alpha-\alpha_0,\mathbf{n}_i)$ and $b_i'=\epsilon_i(\mathbf{t},\mathbf{n}_i)-\epsilon\epsilon_i\kappa_i(\alpha-\alpha_0,\mathbf{t})=0$. 
Taking the derivative of $(\alpha-\alpha_0,\mathbf{t})=0$, gives $0=(\mathbf{t},\mathbf{t})+(\alpha-\alpha_0,\epsilon_1\kappa_1\mathbf{n}_1+\kappa_2\mathbf{n}_2)=\epsilon+b_1\kappa_1+b_2\kappa_2$. But $0=1+\epsilon b_1\kappa_1+\epsilon b_2\kappa_2$ is the equation of a spherical curve (see theorems on Section 4 below).  
\qed
\end{proof}

\begin{remark}
If the pseudo-torsion of a spacelike curve with a lightlike normal vanishes, then the curve is planar (the converse is not true. Indeed, L\'opez \cite{LopesIEJG2014} gives an example of a curve which is planar and has a non-zero pseudo-torsion). However, it follows from the above propositions that all spacelike curves with a lightlike normal are planar, no matter the value of the pseudo-torsion.
\end{remark}


\subsection{Moving frames along lightlike curves}

It is not possible to define a Bishop frame along lightlike curves, we can not even define an orthonormal frame. In this case we must work with the concept of a null frame (see Inoguchi and Lee \cite{InoguchiIEJG2008} for a survey on the geometry of lightlike curves and null frames along them). As in the previous case, we will introduce along $\alpha$ a (null) frame by exploiting the structure of the normal plane only.

Let $\alpha:I\to E_1^3$ be a lightlike curve. In this case, since $\alpha'$ is a lightlike vector, the normal plane $N_{\alpha(s)}=\mbox{span}\{\alpha'(s)\}^{\perp}$ is lightlike and $\alpha'\in N_{\alpha}$. So, we have $N_{\alpha(s)}=\mbox{span}\{\alpha'(s),\mathbf{z}_1(s)\}$, where $\mathbf{z}_1$ is a unit spacelike vector. Denote by $\mathbf{t}=\alpha'$ the tangent vector. If $\mathbf{t}'$ is spacelike, then we can assume $\alpha$ parametrized by pseudo arc-length. Let $\mathbf{z}_2$ be the lightlike vector orthogonal to $\mathbf{z}_1$ and satisfying $(\mathbf{t},\mathbf{z}_2)=-1$. In this case, the equations of motion are
\begin{equation}
\frac{{\rm d}}{{\rm d}s}\left(
\begin{array}{c}
\mathbf{t}\\
\mathbf{z}_1\\
\mathbf{z}_2\\
\end{array}
\right)=\left(
\begin{array}{ccc}
\kappa_3 & \kappa_1 & 0\\
-\kappa_2 & 0 & \kappa_1\\
0 & -\kappa_{2} & -\kappa_3\\ 
\end{array}
\right)\left(
\begin{array}{c}
\mathbf{t}\\
\mathbf{z}_1\\
\mathbf{z}_2\\
\end{array}
\right)=\left(
\begin{array}{ccc}
0 & \kappa_1 & -\kappa_3\\
-\kappa_1 & 0 & \kappa_2\\
\kappa_3 & -\kappa_{2} & 0\\ 
\end{array}
\right)E_{\mathbf{t},\mathbf{z}_1,\mathbf{z}_2}\left(
\begin{array}{c}
\mathbf{t}\\
\mathbf{z}_1\\
\mathbf{z}_2\\
\end{array}
\right),\label{eq::FrameEqsLightCurves}
\end{equation}
where $\kappa_1=(\mathbf{t}',\mathbf{z}_1)$, $\kappa_2=(\mathbf{z}_1',\mathbf{z}_2)$, and $\kappa_3=(\mathbf{z}_2',\mathbf{t})$. Here $E_{\mathbf{t},\mathbf{n},\mathbf{b}}=[(\mathbf{e}_i,\mathbf{e}_j)]_{ij}$ denotes the matrix associated with the null frame $\{\mathbf{e}_0=\mathbf{t},\mathbf{e}_1=\mathbf{n},\mathbf{e}_2=\mathbf{b}\}$. The coefficient $\kappa_1$ plays a significant role on the theory of moving frames along lightlike curves.
\begin{remark}
If $\mathbf{t}'$ is spacelike and if we take $\mathbf{z}_1=\mathbf{t}'=\mathbf{n}$, then $\mathbf{z}_2=\mathbf{b}$ and $\kappa_1=1$, $\kappa_2=\tau$ and $\kappa_3=0$. However, the Frenet frame is not defined when $\mathbf{t}'$ is lightlike. Here, the presence of $\kappa_3$ allows for a description of lightlike curves regardless of the casual character of $\mathbf{t}'$. 
\end{remark}

\begin{proposition}
A lightlike curve $\alpha:I\to E_1^3$ is a straight line if and only if $\kappa_1=0$. Moreover, if $\alpha$ is not a straight line and is parametrized by the pseudo arc-length, then $\kappa_1^2=1$.
\label{prop::CharacLightlikeLine}
\end{proposition}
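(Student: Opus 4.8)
The plan is to extract both statements from the first equation of the null-frame system \eqref{eq::FrameEqsLightCurves}, namely $\mathbf{t}'=\kappa_3\,\mathbf{t}+\kappa_1\,\mathbf{z}_1$, but I would first re-derive this identity without assuming the pseudo arc-length gauge, so that it is available for the equivalence in the first assertion (where, for a straight line, the pseudo arc-length parameter is not even defined). The key preliminary step is to observe that $\mathbf{t}'$ always lies in the normal plane: since $\mathbf{t}$ is lightlike we have $(\mathbf{t},\mathbf{t})=0$, and differentiating gives $2(\mathbf{t}',\mathbf{t})=0$, so $\mathbf{t}'\in\{\mathbf{t}\}^{\perp}=N_\alpha=\mathrm{span}\{\mathbf{t},\mathbf{z}_1\}$. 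Thus $\mathbf{t}'=a\,\mathbf{t}+b\,\mathbf{z}_1$ for some functions $a,b$, and pairing with $\mathbf{z}_1$ while using $(\mathbf{t},\mathbf{z}_1)=0$ and $(\mathbf{z}_1,\mathbf{z}_1)=1$ identifies $b=(\mathbf{t}',\mathbf{z}_1)=\kappa_1$.

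For the equivalence, I would use that $\mathbf{t}$ (lightlike) and $\mathbf{z}_1$ (spacelike) are linearly independent, so from $\mathbf{t}'=a\,\mathbf{t}+\kappa_1\,\mathbf{z}_1$ the vector $\mathbf{t}'$ is proportional to $\mathbf{t}$ if and only if $\kappa_1=0$. It then suffices to identify the condition $\alpha''\parallel\alpha'$ with being a straight line: the direct implication follows by differentiating a parametrization of a line, while for the converse the vanishing $\kappa_1=0$ turns $\mathbf{t}'=a(s)\,\mathbf{t}$ into a linear ODE whose solution $\mathbf{t}(s)=e^{\int_{s_0}^{s}a}\,\mathbf{t}(s_0)$ keeps a fixed direction, so that $\alpha$ traces a straight line. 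This yields $\kappa_1=0\iff\alpha$ is a straight line.

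For the quantitative statement, assuming $\alpha$ parametrized by pseudo arc-length I would simply compute the pseudo-norm of $\mathbf{t}'=a\,\mathbf{t}+\kappa_1\,\mathbf{z}_1$: the terms involving $(\mathbf{t},\mathbf{t})$ and $(\mathbf{t},\mathbf{z}_1)$ vanish and $(\mathbf{z}_1,\mathbf{z}_1)=1$, leaving $(\mathbf{t}',\mathbf{t}')=\kappa_1^2$. Since the pseudo arc-length parameter is defined exactly by $(\alpha'',\alpha'')=(\mathbf{t}',\mathbf{t}')=1$ — a normalization available precisely when $\alpha$ is not a straight line, for then $(\alpha'',\alpha'')\neq0$ — I conclude $\kappa_1^2=1$. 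The computations here are entirely routine; the only genuine point is the ODE argument turning $\alpha''\parallel\alpha'$ into a straight line, and the bookkeeping that the first claim must be read off the parametrization-independent identity $\mathbf{t}'=a\,\mathbf{t}+\kappa_1\,\mathbf{z}_1$ rather than directly from \eqref{eq::FrameEqsLightCurves}, which was written in the pseudo arc-length gauge.
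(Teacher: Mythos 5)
Your proof is correct and follows essentially the same route as the paper's: read off $\mathbf{t}'=a\,\mathbf{t}+\kappa_1\,\mathbf{z}_1$ from the null-frame equations, note that $\kappa_1=0$ is equivalent to $\mathbf{t}'\parallel\mathbf{t}$, integrate the resulting ODE to get a straight line, and compute $(\mathbf{t}',\mathbf{t}')=\kappa_1^2$ in the pseudo arc-length gauge. Your extra care in deriving the identity $\mathbf{t}'=a\,\mathbf{t}+\kappa_1\,\mathbf{z}_1$ independently of the pseudo arc-length normalization is a welcome tightening of the bookkeeping, not a genuinely different argument.
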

\begin{proof}
If $\kappa_1=0$, then $\mathbf{t}'=\kappa_3\mathbf{t}$. Integration of this equation gives $\alpha=\alpha_0+(\int {\rm e}^{\int\kappa_3})\mathbf{t}_0$, where $\alpha_0$ and $\mathbf{t}_0$ are constants. Then, $\alpha$ is a straight line. Conversely, let $\alpha=\alpha_0+f\,\mathbf{t}_0$, with $f$ a smooth function. Taking derivatives, it is easy to verify that $\kappa_1=0$.

Now, suppose $\kappa_1\not=0$, so if $\alpha$ is parametrized by pseudo arc-length we have
\begin{equation}
1=(\mathbf{t}',\mathbf{t}')=\kappa_1^2,
\end{equation}
as expected.
\qed
\end{proof}

\section{Characterization of spherical curves in $E_1^3$}

In $E^3$ the function $F(x)=\langle x-P,x-P\rangle$ is non-negative. A sphere of radius $r$ and center $P$ in $E^3$, $\mathbb{S}^2(P;r)$, is then defined as the level sets of $F$, i.e. $\langle x-P,x-P\rangle=r^2$ (if $r=0$ the sphere degenerates to a single point). On the other hand, in $E_1^3$ the function $F_1(x)=(x-P,x-P)$ may assume any value on the real numbers. So, in $E_1^3$ we still define spheres as the level sets of $F_1$, but one must consider three types of spheres, depending on the sign of $F_1$. We shall adopt the following standard notations:
\begin{equation}
\mathbb{S}_1^2(P;r) =\{x\in E_1^3\,:\, (x-P,x-P)=r^2\},\\
\end{equation}
\begin{equation}
\mathcal{C}^2(P) = \{x\in E_1^3\,:\,(x-P,x-P)=0\},
\end{equation}
and
\begin{equation}
\mathbb{H}_0^2(P;r) =\{x\in E_1^3\,:\, (x-P,x-P)=-r^2\},
\end{equation}
where $r\in (0,\infty)$. These spheres are known as pseudo-sphere, light-cone, and pseudo-hyperbolic space, respectively. As surfaces in $E_1^3$ pseudo-spheres and pseudo-hyperbolic spaces have constant Gaussian curvature $1/r^2$ and $-1/r^2$ \cite{LopesIEJG2014}, respectively\footnote{If we see them as surfaces in $E^3$, their Gaussian curvatures are not constant and, additionally, for $\mathbb{S}_1^2(P;r)$ it is negative, while for $\mathbb{H}_0^2(P;r)$ it is positive.}. 

It is well known that the Minkowski metric restricted to $\mathbb{H}_0^2(P;r)$ is a positive definite metric. Then, it follows that $\mathbb{H}_0^2(P;r)$ is a spacelike surface and, consequently, there is no lightlike or timelike curves in $\mathbb{H}_0^2(P;r)$. On the other hand, light-cones are lightlike surfaces \cite{LopesIEJG2014} and, consequently, there is no lightlike curves on them. The pseudo-sphere is the only one that has the three types of curves \cite{InoguchiIEJG2008,LopesIEJG2014}:
\begin{lemma}
There exist no time- and lightlike curves in $\mathbb{H}_0^2(P;r)$ and no timelike curves in $\mathcal{C}^2(P)$.
\end{lemma}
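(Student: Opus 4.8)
The plan is to exploit the elementary but decisive fact that a regular curve lying on a surface must have its velocity vector tangent to that surface; the causal character of the ambient tangent plane then dictates which causal characters the velocity may assume. Since all three forbidden combinations concern curves on $\mathbb{H}_0^2(P;r)$ or $\mathcal{C}^2(P)$, which are level sets of the quadratic form $x\mapsto(x-P,x-P)$, it suffices to determine the causal character of the tangent planes of these two surfaces and then read off the permissible causal characters of $\alpha'$.

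First I would write down the constraint coming from the defining equation. If $\alpha:I\to E_1^3$ lies on one of these level sets, then $(\alpha(s)-P,\alpha(s)-P)$ is constant, and differentiating gives $(\alpha(s)-P,\alpha'(s))=0$. Hence $\alpha'$ is everywhere orthogonal to the position vector $\alpha-P$, so it lives in the plane $\mathrm{span}\{\alpha(s)-P\}^{\perp}$, whose causal character I can read off from that of $\alpha-P$ using the Proposition on causal characters of vector subspaces stated above.

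For $\mathbb{H}_0^2(P;r)$ we have $(\alpha-P,\alpha-P)=-r^2<0$, so $\alpha-P$ is timelike; by item (iii) of that Proposition its orthogonal complement is spacelike. Thus $\alpha'$, being a nonzero vector of a spacelike plane, is spacelike, and $\alpha$ can be neither timelike nor lightlike. For $\mathcal{C}^2(P)$, away from the vertex $P$, we have $(\alpha-P,\alpha-P)=0$ with $\alpha-P\neq 0$, so $\alpha-P$ is lightlike; by item (ii) its orthogonal complement is again lightlike, and by item (iv) a lightlike plane contains a lightlike and a spacelike vector but no timelike one. Therefore $\alpha'$ is never timelike, and $\alpha$ cannot be a timelike curve. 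This reproves, in a self-contained way, the facts (already invoked just before the lemma) that $\mathbb{H}_0^2(P;r)$ is spacelike and that $\mathcal{C}^2(P)$ is lightlike.

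I expect no serious obstacle: the only delicate points are invoking the correct items of the subspace Proposition and excluding the cone's vertex, where the level set fails to be a regular surface and the tangent-plane argument does not apply directly. This is harmless, since a regular timelike (or lightlike) curve keeps $\alpha'$ of fixed nonzero causal type and hence, by the orthogonality relation $(\alpha-P,\alpha')=0$, stays off the vertex; the argument then goes through on the entire regular part of $\mathcal{C}^2(P)$.
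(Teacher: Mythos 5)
Your proposal is correct and follows essentially the same route as the paper, which justifies the lemma by the facts (cited from L\'opez) that $\mathbb{H}_0^2(P;r)$ is a spacelike surface and $\mathcal{C}^2(P)$ a lightlike one, so that the causal character of the tangent planes constrains that of $\alpha'$; you simply make this self-contained by differentiating $(\alpha-P,\alpha-P)=\mathrm{const}$ to identify the tangent plane with $\mathrm{span}\{\alpha-P\}^{\perp}$ and applying the proposition on causal characters of subspaces. The only cosmetic caveat is that your remark that the orthogonality relation keeps the curve off the cone's vertex is not quite an argument (the relation is vacuous there); the cleaner way to dispose of the vertex is to note that timelikeness of $\alpha'$ is an open condition while it fails at all nearby non-vertex parameters.
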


Now we generalize Bishop's characterization of spherical curves in $E^3$ \cite{BishopMonthly} to the context of spheres in $E_1^3$.

\begin{theorem}
A $C^2$ regular spacelike or timelike curve $\alpha:I\to E_1^3$ lies on a sphere of nonzero radius, i.e., $\alpha\subseteq \mathbb{H}_0^2(P;r)$ or $\mathbb{S}_1^2(P;r)$, if and only if its normal development, i.e., the curve $(\kappa_1(s),\kappa_2(s))$, lies on a line not passing through the origin. Moreover, the distance of this line from the origin, $d$, and the radius of the sphere are reciprocals: $d=1/r$.
\label{theo::characSpaceAndLightCurves}
\end{theorem}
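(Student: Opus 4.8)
The plan is to mimic Bishop's Euclidean argument, but carried out with the signed Bishop equations \eqref{eq::GenBishopEqsNonlightCurves}, keeping careful track of the causal signs $\epsilon=(\mathbf{t},\mathbf{t})$, $\epsilon_1=(\mathbf{n}_1,\mathbf{n}_1)$ and $\epsilon_2=(\mathbf{n}_2,\mathbf{n}_2)=+1$, and recalling that $\mathbf{n}_i'=-\epsilon\kappa_i\,\mathbf{t}$, $\mathbf{t}'=\epsilon_1\kappa_1\mathbf{n}_1+\kappa_2\mathbf{n}_2$, and $\epsilon_1=-\epsilon$ (so $\epsilon_1=-1$ for spacelike and $\epsilon_1=+1$ for timelike curves). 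The organizing observation, which drives both directions, is that the combination $a_1\kappa_1+a_2\kappa_2$ controls the derivative of the position relative to a fixed point once that position is expanded in the Bishop normals.

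For the necessity I would put $\gamma(s)=\alpha(s)-P$ and use that $(\gamma,\gamma)=\pm r^2$ is constant. One differentiation gives $(\gamma,\mathbf{t})=0$, so $\gamma$ lies in the normal plane and can be written $\gamma=a_1\mathbf{n}_1+a_2\mathbf{n}_2$ with $a_i=\epsilon_i(\gamma,\mathbf{n}_i)$. A short computation using $\gamma'=\mathbf{t}$, $\mathbf{n}_i'=-\epsilon\kappa_i\mathbf{t}$ and $(\gamma,\mathbf{t})=0$ shows $(\gamma,\mathbf{n}_i)'=0$, so the $a_i$ are constants. Differentiating $\gamma=a_1\mathbf{n}_1+a_2\mathbf{n}_2$ and matching with $\gamma'=\mathbf{t}$ then forces $a_1\kappa_1+a_2\kappa_2=-\epsilon$; since $\epsilon=\pm1\neq0$, the normal development lies on a line missing the origin. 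This is exactly the mechanism of Proposition \ref{prop::CharacPlaneCurves}, but now with a nonzero constant on the right.

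For the radius I would read that line through the metric induced on the normal development plane, namely $g=\mathrm{diag}(\epsilon_1,\epsilon_2)$, as suggested by the conic identity $\epsilon_1\kappa_1^2+\kappa_2^2=\eta\kappa^2$. Writing the line as $g(N,\,\cdot\,)=-\epsilon$ with $g$-normal $N=(\epsilon_1 a_1,a_2)$, one has $g(N,N)=\epsilon_1 a_1^2+a_2^2=(\gamma,\gamma)=\pm r^2$, so $N$ is non-null exactly when $r\neq0$, and the pseudo-distance from the origin is $d=|{-\epsilon}|/\sqrt{|g(N,N)|}=1/r$. I expect this to be the only delicate point: one must decide in what sense ``distance'' is meant and verify that $|g(N,N)|$ equals $r^2$ rather than the naive Euclidean $a_1^2+a_2^2$. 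For timelike curves $\epsilon_1=+1$, $g$ is Euclidean, and the statement reduces to the Euclidean form of Bishop's characterization (Theorem \ref{theo:BishopCharacSpherericalCurves}); for spacelike curves the sign of $g(N,N)$ additionally records whether the sphere is $\mathbb{S}_1^2$ or $\mathbb{H}_0^2$.

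For the sufficiency I would reverse the construction. Given $a_1\kappa_1+a_2\kappa_2=c$ with $c\neq0$, rescale by $b_i=-\epsilon\,a_i/c$ so that $b_1\kappa_1+b_2\kappa_2=-\epsilon$, set $\mathbf{x}=b_1\mathbf{n}_1+b_2\mathbf{n}_2$, and compute $\mathbf{x}'=-\epsilon(b_1\kappa_1+b_2\kappa_2)\mathbf{t}=\mathbf{t}$, whence $(\alpha-\mathbf{x})'=0$. Thus $P:=\alpha-\mathbf{x}$ is a fixed point and $(\alpha-P,\alpha-P)=\epsilon_1 b_1^2+b_2^2$ is constant, so $\alpha$ sits on a level set of $F_P$, whose sign selects $\mathbb{S}_1^2$ or $\mathbb{H}_0^2$. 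Nonvanishing of this constant---equivalently, that the line is not parallel to the asymptotes $\kappa_1=\pm\kappa_2$ of the timelike normal plane (cf. Proposition \ref{prop::geomNormalDevelopm})---is precisely what yields a nonzero radius; the borderline $\epsilon_1 b_1^2+b_2^2=0$ lands on a light-cone and is excluded here. Reversing the distance computation recovers $r=1/d$ and closes the equivalence.
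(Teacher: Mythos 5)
Your proposal is correct and follows essentially the same route as the paper: differentiate the defining equation of the sphere, expand $\alpha-P$ in the Bishop normals, show the coefficients $a_i$ are constant, and extract the linear relation $\epsilon+a_1\kappa_1+a_2\kappa_2=0$; conversely, define $P=\alpha-a_1\mathbf{n}_1-a_2\mathbf{n}_2$ and check $P'=0$. (Obtaining the line by matching $\gamma'=\mathbf{t}$ against the derivative of $\gamma=a_1\mathbf{n}_1+a_2\mathbf{n}_2$ is equivalent to the paper's second differentiation of $(\gamma,\mathbf{t})=0$.) Two of your refinements go beyond what the paper writes down and are worth keeping: the explicit pseudo-distance computation $d=1/\sqrt{|\epsilon_1 a_1^2+a_2^2|}$, which the paper only asserts, and, more importantly, the observation that in the converse direction a spacelike curve with $\epsilon_1 a_1^2+a_2^2=0$ (line parallel to the asymptotes $\kappa_1=\pm\kappa_2$) lands on a light-cone rather than on a sphere of nonzero radius --- a degenerate case the paper's proof passes over silently and which must indeed be excluded for the stated ``if and only if'' to hold literally (compare the paper's own light-cone theorem, where the normal development also lies on a line missing the origin).
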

\begin{remark}
When a curve is spacelike the normal plane is timelike and then the distance in the normal development plane should be understood as the distance induced by the restriction of $(\cdot,\cdot)$ on the normal plane. So, circles in this plane metric are hyperbolas.
\end{remark}
\begin{proof}
{\it of Theorem} \ref{theo::characSpaceAndLightCurves}. Denote by $\mathcal{Q}$ a sphere $\mathbb{H}_0^2(P;r)$ or $\mathbb{S}_1^2(P;r)$. If $\alpha$ lies in $\mathcal{Q}$, then taking the derivative of $(\alpha-P,\alpha-P)=\pm r^2$ gives 
\begin{equation}
(\alpha-P,\mathbf{t})=0.\label{eq::aux1} 
\end{equation}
This implies that $\alpha-P=a_1\mathbf{n}_1+a_2\mathbf{n}_2$. Now, let us investigate the coefficients $a_i$. Since $a_i=\epsilon_i(\alpha-P,\mathbf{n}_i)$, where $\epsilon_i=(\mathbf{n}_i,\mathbf{n}_i)$, we have
\begin{eqnarray}
a_i' & = & \epsilon_i(\mathbf{t},\mathbf{n}_i)+\epsilon_i(\alpha-P,\mathbf{n}_i')=0\,.
\end{eqnarray}
Therefore, the coefficients $a_1$ and $a_2$ are constants. Finally, taking the derivative of Eq. (\ref{eq::aux1}), we find
\begin{equation}
0=(\mathbf{t},\mathbf{t})+(\alpha-P,\epsilon_1\kappa_1\mathbf{n}_1+\kappa_2\mathbf{n}_2)=\epsilon+a_1\kappa_1+a_2\kappa_2.
\end{equation}
Thus, the normal  development $(\kappa_1,\kappa_2)$ lies on a straight line $1+\epsilon a_1X+\epsilon a_2Y=0$ not passing through the origin. If $Q=\mathbb{S}_1^2(P;r)$, then $r^2=(\alpha-P,\alpha-P)=\epsilon_1 a_1^2+a_2^2=1/d^2$, where $d$ is the distance of the line from the origin. On the other hand, if $Q=\mathbb{H}_0^2(P;r)$, then the curve is necessarily spacelike and $\epsilon_1=-1$, since $\mathbf{n}_1$ is timelike (as mentioned before, $\mathbb{H}_0^2(P;r)$ is a spacelike surface). So, we have $r^2=-(\alpha-P,\alpha-P)= a_1^2-a_2^2=\pm1/d^2$ (the orientation of the hyperbolas will depend on the casual character of the normal vector $\mathbf{n}$ according to Proposition \ref{prop::geomNormalDevelopm}: see figure 1).

Conversely, assume that $0=1+\epsilon a_1\kappa_1+\epsilon a_2\kappa_2$ for some constants $a_1$ and $a_2$.  Define the point $P=\alpha-a_1\mathbf{n}_1-a_2\mathbf{n}_2$. Then $P'=\mathbf{t}+(a_1\epsilon\kappa_1+a_2\epsilon\kappa_2)\mathbf{t}=0$ and therefore $P$ is a fixed point. It follows that $\alpha$ lies on a sphere of nonzero radius and center $P$: $(\alpha-P,\alpha-P)=\epsilon_1a_1^2+a_2^2$.
\qed
\end{proof}

For spacelike curves on light-cones (as mentioned before there is no timelike curve on light-cones: Lemma 1) we have an analogous characterization:
\begin{theorem}
A $C^2$ regular spacelike curve $\alpha:I\to E_1^3$ lies on a light-cone $\mathcal{C}^2(P)$, i.e., lies on a sphere of zero radius, if and only if its normal development, i.e., the curve $(\kappa_1(s),\kappa_2(s))$, lies on a line $\{a_1X+a_2Y+1=0\}$ not passing through the origin. Moreover, we have the relation $a_2=\pm a_1$.
\end{theorem}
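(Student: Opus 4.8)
The plan is to mimic, almost verbatim, the proof of Theorem~\ref{theo::characSpaceAndLightCurves}, specializing the sign parameters to the spacelike situation and imposing the zero-radius (light-cone) condition. First I would observe that, because a light-cone is a lightlike surface, the Lemma above guarantees that no timelike (and no lightlike) curve can lie on $\mathcal{C}^2(P)$, so it is legitimate to restrict attention to the spacelike case. For a spacelike curve one has $\epsilon=(\mathbf{t},\mathbf{t})=+1$; moreover, since $\mathbf{n}_2=\mathbf{t}\times\mathbf{n}_1$ forces $\epsilon_2=-\epsilon\epsilon_1=+1$ and the normal plane is timelike, we necessarily have $\epsilon_1=-1$. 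These values $\epsilon=1$, $\epsilon_1=-1$, $\epsilon_2=+1$ are the only data that differ from the previous argument, and they are exactly what makes the distinguishing relation $a_2=\pm a_1$ appear.

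For the forward direction I would assume $\alpha\subseteq\mathcal{C}^2(P)$, i.e. $(\alpha-P,\alpha-P)=0$, and differentiate to get $(\alpha-P,\mathbf{t})=0$. Since the tangent component of $\alpha-P$ vanishes, one writes $\alpha-P=a_1\mathbf{n}_1+a_2\mathbf{n}_2$, and the coefficients $a_i$ are constant by the same computation as before (using $(\mathbf{t},\mathbf{n}_i)=0$ and $\mathbf{n}_i'=-\epsilon\kappa_i\mathbf{t}$). Differentiating $(\alpha-P,\mathbf{t})=0$ once more and inserting $\mathbf{t}'=\epsilon_1\kappa_1\mathbf{n}_1+\kappa_2\mathbf{n}_2$ from \eqref{eq::GenBishopEqsNonlightCurves} yields $0=\epsilon+a_1\kappa_1+a_2\kappa_2=1+a_1\kappa_1+a_2\kappa_2$, so the normal development lies on the line $a_1X+a_2Y+1=0$, which cannot pass through the origin (its constant term is $1$). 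Finally the light-cone condition reads $0=(\alpha-P,\alpha-P)=\epsilon_1a_1^2+\epsilon_2a_2^2=-a_1^2+a_2^2$, whence $a_2=\pm a_1$.

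For the converse I would suppose the normal development lies on $a_1X+a_2Y+1=0$ with $a_2=\pm a_1$, and define $P=\alpha-a_1\mathbf{n}_1-a_2\mathbf{n}_2$. Differentiating and using the line relation $a_1\kappa_1+a_2\kappa_2=-1$ gives $P'=\bigl(1+\epsilon(a_1\kappa_1+a_2\kappa_2)\bigr)\mathbf{t}=0$, so $P$ is a fixed point; then $(\alpha-P,\alpha-P)=\epsilon_1a_1^2+\epsilon_2a_2^2=-a_1^2+a_2^2=0$ by $a_2=\pm a_1$, and $\alpha$ lies on $\mathcal{C}^2(P)$. Here I would note in passing that $(a_1,a_2)\neq(0,0)$, since otherwise the ``line'' degenerates to $1=0$.

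The argument is essentially routine once Theorem~\ref{theo::characSpaceAndLightCurves} is in hand, so there is no genuine obstacle; the one point to watch is the careful bookkeeping of the signs $\epsilon,\epsilon_1,\epsilon_2$, because it is precisely the indefinite expression $\epsilon_1a_1^2+\epsilon_2a_2^2=-a_1^2+a_2^2$ that converts the radius-zero condition into $a_2=\pm a_1$. Geometrically this is the degenerate instance of Proposition~\ref{prop::geomNormalDevelopm}, in which the hyperbolas collapse onto their asymptotes $X=\pm Y$, matching the fact that the light-cone case corresponds to a lightlike normal $\mathbf{n}$.
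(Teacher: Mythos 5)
Your proposal is correct and follows essentially the same route as the paper's own proof: differentiate $(\alpha-P,\alpha-P)=0$ twice with $\epsilon=1$, $\epsilon_1=-1$, $\epsilon_2=+1$ to obtain the line $1+a_1\kappa_1+a_2\kappa_2=0$, extract $a_2=\pm a_1$ from $0=-a_1^2+a_2^2$, and in the converse recover the fixed center $P=\alpha-a_1\mathbf{n}_1-a_2\mathbf{n}_2$. Your added remarks (the reduction to the spacelike case via the Lemma and the observation that $(a_1,a_2)\neq(0,0)$) are consistent with, and slightly more explicit than, the paper's argument.
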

\begin{proof}
Let $\alpha$ be a curve in $\mathcal{C}^2(P)$ with $(\mathbf{t},\mathbf{t})=1$ and $(\mathbf{n}_1,\mathbf{n}_1)=-1$, i.e., $\epsilon=1$ and $\epsilon_1=-1$. Now taking the derivative of $(\alpha-P,\alpha-P)=0$ gives 
\begin{equation}
(\alpha-P,\mathbf{t})=0.\label{eq::aux2} 
\end{equation}
This implies that $\alpha-P=a_1\mathbf{n}_1+a_2\mathbf{n}_2$. Since $a_i=\epsilon_i(\alpha-P,\mathbf{n}_i)$, where $\epsilon_i=(\mathbf{n}_i,\mathbf{n}_i)$, we have
\begin{eqnarray}
a_i' & = & \epsilon_i(\mathbf{t},\mathbf{n}_i)+(\alpha-P,\mathbf{n}_i')=0\,.
\end{eqnarray}
Therefore, the coefficients $a_1$ and $a_2$ are constants. Finally, taking the derivative of Eq. (\ref{eq::aux2}), we find
\begin{equation}
0=(\mathbf{t},\mathbf{t})+(\alpha-P,-\kappa_1\mathbf{n}_1+\kappa_2\mathbf{n}_2)=1+a_1\kappa_1+a_2\kappa_2.
\end{equation}
Thus, the normal  development $(\kappa_1(s),\kappa_2(s))$ lies on a straight line $1+a_1X+a_2Y=0$ not passing through the origin. Moreover, $0=(\alpha-P,\alpha-P)=- a_1^2+a_2^2$, which implies $a_2=\pm a_1$.

Conversely, assume that $0=1+ a_1\kappa_1\pm a_1\kappa_2$ for some constant $a_1$. Define the point $P=\alpha-a_1\mathbf{n}_1\mp a_1\mathbf{n}_2$, which satisfies $P'=\mathbf{t}+(a_1\kappa_1\pm a_1\kappa_2)\mathbf{t}=0$. In other words, $P$ is a fixed point and it follows that $\alpha$ lies on a light-cone $\mathcal{C}^2(P)$ of center $P$.
\qed
\end{proof}

For lightlike curves we are not able to use a Bishop frame.  However, by using null frames, we can still state a criterion for a lightlike curve be contained on pseudo-spheres or light-cones (trying to follow steps as in the previous cases does not work, due to the lack of good orthogonality properties). In fact, the following results are generalizations of those of Inoguchi and Lee \cite{InoguchiIEJG2008} for pseudo-spherical lightlike curves.
\begin{theorem}
If a $C^2$ regular lightlike curve $\alpha:I\to E_1^3$ lies on a pseudo-sphere or a light-cone, then $\kappa_1=0$ or, equivalently, $\alpha$ is a straight line.
\end{theorem}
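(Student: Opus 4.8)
The plan is to work entirely in the null frame $\{\mathbf{t},\mathbf{z}_1,\mathbf{z}_2\}$ of equation (\ref{eq::FrameEqsLightCurves}) and argue by contradiction. If $\alpha$ is a straight line there is nothing to prove, since $\kappa_1=0$ by Proposition \ref{prop::CharacLightlikeLine}; so I would assume $\alpha$ is \emph{not} a straight line, parametrize it by the pseudo arc-length, and invoke the same proposition to get $\kappa_1^2=1$, hence $\kappa_1(s)\neq 0$ for every $s$. Writing the hypothesis as $(\alpha-P,\alpha-P)=c$, with $c=r^2>0$ on a pseudo-sphere and $c=0$ on a light-cone, the first step is to differentiate this relation to obtain the orthogonality $(\alpha-P,\mathbf{t})=0$.

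Next I would expand $\alpha-P=A\mathbf{t}+B\mathbf{z}_1+C\mathbf{z}_2$ in the null frame, which is legitimate because $\{\mathbf{t},\mathbf{z}_1,\mathbf{z}_2\}$ is a basis of $\mathbb{R}^3$ at each point. Since the only nontrivial pairing of a frame vector with $\mathbf{t}$ is $(\mathbf{t},\mathbf{z}_2)=-1$, the identity $(\alpha-P,\mathbf{t})=0$ forces $C=0$, so $\alpha-P=A\mathbf{t}+B\mathbf{z}_1$ and consequently $(\alpha-P,\alpha-P)=B^2=c$. Differentiating $\alpha-P=A\mathbf{t}+B\mathbf{z}_1$, substituting the frame equations (\ref{eq::FrameEqsLightCurves}), and comparing with $\alpha'=\mathbf{t}$, I would collect the coefficients in the basis $\{\mathbf{t},\mathbf{z}_1,\mathbf{z}_2\}$. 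The $\mathbf{z}_2$-coefficient yields $B\kappa_1=0$ and the $\mathbf{z}_1$-coefficient yields $A\kappa_1+B'=0$. Because $\kappa_1$ never vanishes, the first relation gives $B\equiv 0$.

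It remains to close both cases. On a pseudo-sphere, $B^2=c=r^2>0$ contradicts $B\equiv0$ at once, so no non-straight lightlike curve can lie on it and $\kappa_1=0$. On a light-cone the relation $B^2=c=0$ is \emph{compatible} with $B\equiv0$, so this is the delicate case: here $\alpha-P=A\mathbf{t}$, and feeding $B\equiv0$ into $A\kappa_1+B'=0$ gives $A\kappa_1=0$, hence $A\equiv0$ since $\kappa_1\neq0$. But $A\equiv0$ means $\alpha\equiv P$, contradicting the regularity of $\alpha$, whose tangent $\mathbf{t}$ is nonzero. In either case we reach a contradiction, so $\kappa_1=0$; by Proposition \ref{prop::CharacLightlikeLine} this is equivalent to $\alpha$ being a straight line, as claimed. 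The main obstacle is precisely the light-cone case, where membership in the surface alone does not constrain $B$; overcoming it requires the auxiliary $\mathbf{z}_1$-component equation together with the regularity of $\alpha$, rather than the single second-order computation that disposes of the pseudo-sphere.
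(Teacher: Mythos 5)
Your argument is correct and is essentially the paper's own proof in dual form: your coefficients $C$, $B$, $A$ are (up to sign) the inner products $(\alpha-P,\mathbf{t})$, $(\alpha-P,\mathbf{z}_1)$, $(\alpha-P,\mathbf{z}_2)$ that the paper obtains by successively differentiating $(\alpha-P,\alpha-P)=\rho$, and your relations $B\kappa_1=0$ and $A\kappa_1+B'=0$ are exactly the paper's $\kappa_1(\mathbf{z}_1,\alpha-P)=0$ and $\kappa_1(\mathbf{z}_2,\alpha-P)=0$, both arguments ending with $\alpha\equiv P$ against regularity. The only cosmetic differences are that you dispose of the pseudo-sphere one step earlier via $B^2=r^2>0$ and that you make explicit, in the light-cone case, the appeal to regularity that the paper compresses into ``which is not possible.''
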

\begin{proof}
Let $\mathcal{Q}$ be a sphere of non-negative radius denoted by $\mathcal{Q}=\{x\,:\,(x-P,x-P)=\rho\}$ where $\rho=r^2$ ($r>0$) or $0$, i.e., $\mathcal{Q}$ is a pseudo-sphere $\mathbb{S}_1^2(P;r)$ or a light-cone $\mathcal{C}^2(P)$. If $\alpha\subseteq\mathcal{Q}$, taking the derivative of $(x-P,x-P)=\rho$ gives
\begin{equation}
(\mathbf{t},x-P)=0.\label{eq::auxtx-pzero}
\end{equation}
Deriving the above equation gives
\begin{equation}
\kappa_1(\mathbf{z}_1,x-P)=0.
\end{equation}
If $\kappa_1$ were not zero, then we would find $(\mathbf{z}_1,x-P)=0$, which by taking a derivative again gives $(\mathbf{z}_2,x-P)=0$. From these two last equations, and from Eq. (\ref{eq::auxtx-pzero}), we would conclude that $x-P=0$, which is not possible. In short, the curve must satisfy $\kappa_1=0$. Finally, by Proposition \ref{prop::CharacLightlikeLine} it follows that $\alpha$ must be a straight line.
\qed
\end{proof}
\begin{remark}
Surfaces in a semi-Riemannian manifold $M_1^3$ have an interesting property: a lightlike curve is always a pregeodesic, i.e., there exists a parametrization that makes the curve a parametrized geodesic \cite{ONeill}. In $\mathbb{R}^3$ equipped with the standard Minkowski metric, a lightlike curve is a geodesic if and only if it is straight line \cite{InoguchiIEJG2008}.
\end{remark}

The converse of the above theorem is not true. In fact, taking $(\cdot,\cdot)$ as the standard Minkowski metric, the straight line $\alpha(\tau)=(0,0,\tau)$ does not lie on any pseudo-sphere or light-cone. However, we have the following partial converse: 
\begin{proposition}
Let $\alpha_0\in \mathcal{Q}(P;\rho)=\{x:(x-P,x-P)=\rho\}$ be a point on a pseudo-sphere or light-cone, i.e., $\rho=r^2$ ($r>0$) or $=0$. If $\mathbf{u}\in T_{\alpha_0}\mathcal{Q}(P;\rho)$ is a lightlike vector, then for any smooth function $f(\tau)$ the curve $\alpha(\tau)=\alpha_0+f(\tau)\,\mathbf{u}$ is a lightlike straight line that lies on $\mathcal{Q}(P;\rho)$.
\end{proposition}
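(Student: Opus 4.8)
The plan is to check the defining equation of $\mathcal{Q}(P;\rho)$ directly along $\alpha(\tau)$; the entire argument reduces to expanding a single inner product by bilinearity. First I would translate the hypothesis $\mathbf{u}\in T_{\alpha_0}\mathcal{Q}(P;\rho)$ into an orthogonality relation. Writing $F_P(x)=(x-P,x-P)$, its differential at $\alpha_0$ is ${\rm d}F_P|_{\alpha_0}(\mathbf{v})=2(\alpha_0-P,\mathbf{v})$, so the tangent space is $T_{\alpha_0}\mathcal{Q}(P;\rho)=\{\alpha_0-P\}^{\perp}$ and the tangency hypothesis becomes simply $(\alpha_0-P,\mathbf{u})=0$.

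With that in hand, I would compute $F_P(\alpha(\tau))$ for $\alpha(\tau)=\alpha_0+f(\tau)\,\mathbf{u}$:
\begin{equation}
(\alpha(\tau)-P,\alpha(\tau)-P)=(\alpha_0-P,\alpha_0-P)+2f(\tau)(\alpha_0-P,\mathbf{u})+f(\tau)^2(\mathbf{u},\mathbf{u}).
\end{equation}
Now the three hypotheses eliminate the last two terms and fix the first: $\alpha_0\in\mathcal{Q}(P;\rho)$ gives $(\alpha_0-P,\alpha_0-P)=\rho$; the tangency condition gives $(\alpha_0-P,\mathbf{u})=0$; and $\mathbf{u}$ being lightlike gives $(\mathbf{u},\mathbf{u})=0$. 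Hence $(\alpha(\tau)-P,\alpha(\tau)-P)=\rho$ for every $\tau$, so $\alpha(\tau)\in\mathcal{Q}(P;\rho)$ for all $\tau$, which establishes containment.

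Finally, I would observe that $\alpha$ traces (a subset of) the affine line through $\alpha_0$ with direction $\mathbf{u}$, and since $\alpha'(\tau)=f'(\tau)\,\mathbf{u}$ is a scalar multiple of the lightlike vector $\mathbf{u}$, the curve is a lightlike straight line (at points where $f'\neq0$ the velocity is genuinely lightlike, and where $f'=0$ the image still lies on the same lightlike line). There is no serious obstacle here: the only points requiring care are the correct identification of $T_{\alpha_0}\mathcal{Q}(P;\rho)$ via the differential of $F_P$, and the recognition that a lightlike direction vector is exactly what makes both the cross term and the quadratic term in the expansion vanish simultaneously. This is precisely why the construction fails to produce a curve lying on the quadric when $\mathbf{u}$ is tangent but not lightlike: in that case the residual term $f(\tau)^2(\mathbf{u},\mathbf{u})$ would be nonzero and $F_P(\alpha(\tau))$ would no longer be constant.
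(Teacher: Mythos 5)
Your proof is correct and follows essentially the same route as the paper's: translate tangency into $(\alpha_0-P,\mathbf{u})=0$, expand $(\alpha(\tau)-P,\alpha(\tau)-P)$ by bilinearity, and observe that the cross term and quadratic term vanish by tangency and lightlikeness respectively. Your version simply spells out a few steps the paper leaves implicit (the identification of the tangent space via the differential of $F_P$ and the explicit remark that $\alpha$ is a lightlike line).
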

\begin{proof}
Using that $\mathbf{u}\in T_{\alpha_0}\mathcal{Q}(P;\rho)$ implies $(\alpha_0-P,\mathbf{u})=0$, we find
\begin{eqnarray}
(\alpha-P,\alpha-P) & = & (\,(\alpha_0-P)+f\,\mathbf{u},(\alpha_0-P)+f\,\mathbf{u})\nonumber\\
& = & (\alpha_0-P,\alpha_0-P)=\rho\,.
\end{eqnarray}
So, the desired result follows.
\qed
\end{proof}

\section{Characterization of curves on Euclidean quadrics}

Quadrics are the simplest examples of level surfaces and understanding how the characterization works in this particular instance will prove very useful. Indeed, it will become clear in the following that the proper geometric setting to attack the characterization problem on a surface $\Sigma=F^{-1}(c)$ is that of a metric induced by the Hessian of $F$.

Points on a quadratic surface $\mathcal{Q}\subset\mathbb{R}^3$ can be characterized by a symmetric matrix $B\in\mbox{M}_{3\times3}(\mathbb{R})$ as
\begin{equation}
x\in\mathcal{Q}\Leftrightarrow\langle \,B(x-P),x-P\,\rangle=r^2,\label{eq:QuadraticSurface}
\end{equation}
where $P$ is a fixed point (the center of $\mathcal{Q}$), $r>0$ is a constant (the radius of $\mathcal{Q}$), and $\langle\cdot,\cdot\rangle$ is the canonical inner product on $\mathbb{R}^3$. Naturally, if the symmetric matrix $B$ has a non-zero determinant, then this non-degenerate quadric induces a metric or a pseudo-metric on $\mathbb{R}^3$ by defining
\begin{equation}
(\cdot,\cdot)=\langle B\,\cdot,\cdot\rangle\,.
\end{equation}

If the matrix $B$ has index 0, then $\mathcal{Q}$ is an ellipsoid and it can be seen as a sphere on the 3-dimensional Riemannian manifold $M^3=(\mathbb{R}^3,\langle B\,\cdot,\cdot\rangle)$. The characterization of those spatial curves that belong to an ellipsoid can be made through a direct adaption of Bishop's characterization of spherical curves in $E^3$ \cite{Etayo2016}. Indeed, one just uses the metric $\langle B\,\cdot,\cdot\rangle$ instead of $\langle\cdot,\cdot\rangle$ and then follows the steps on the construction of a Bishop frame in $E^3$. On the other hand, if the matrix $B$ has index 1, then $\mathcal{Q}$ is a one-sheeted hyperboloid and can be seen as a pseudo-sphere on a Lorentz-Minkowski space $E^3_1=(\mathbb{R}^3,\langle B\,\cdot,\cdot\rangle)$. If $B$ has index 2, $\mathcal{Q}$ is then a two-sheeted hyperboloid and can be seen as a pseudo-hyperbolic plane on a Lorentz-Minkowski space $E^3_1=(\mathbb{R}^3,\langle -B\,\cdot,\cdot\rangle)$. This way, the results on the previous section can be applied in order to characterize those spatial curves that belong to a (one or two-sheeted) hyperboloid.

Since the characterization of curves on a quadric is made be reinterpreting the problem on a new geometric setting, a natural question then arises: {\it How do we interpret the casual character that a spatial curve assumes when we pass from $E^3$ to $E_1^3$?}

This question can be answered if we take into account the following expression for the normal curvature on a level surface $\Sigma=F^{-1}(c)$ \cite{DombrowskiMN1968}
\begin{equation}
\kappa_n(p,\mathbf{v}) = \frac{\langle \mbox{Hess}_pF\,\mathbf{v},\mathbf{v}\rangle}{\Vert\nabla_p F\Vert},\label{eqNormalCurvLevelSets}
\end{equation}
where $\mathbf{v}\in T_p\Sigma$, and $\mbox{Hess}\,F$ and  $\nabla F$ are the Hessian and the gradient vector of $F$, respectively (for more details involving the expressions for the curvatures of level set surfaces see \cite{GoldmanCAGD2005}). Then, we have the following interpretation:
\begin{proposition}
If $\alpha:I\to\mathbb{R}^3$ is a curve on a non-degenerate quadric $\mathcal{Q}$, then asymptotic directions (in $\mathcal{Q}\subseteq E^3$) correspond to lightlike directions (in $\mathcal{Q}\subseteq E_1^3$).
\label{prop::InterpretCasualChar}
\end{proposition}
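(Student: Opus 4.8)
The plan is to feed the level-surface normal-curvature formula (\ref{eqNormalCurvLevelSets}) the explicit Hessian of the quadratic function defining $\mathcal{Q}$ and then read off the equivalence directly. Writing $\mathcal{Q}=F^{-1}(r^2)$ with $F(x)=\langle B(x-P),x-P\rangle$, I would first compute the gradient and the Hessian. Since $B$ is symmetric, differentiation gives $\nabla_pF=2B(p-P)$, and because $F$ is quadratic the Hessian is the constant symmetric map $\mbox{Hess}_pF=2B$.

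A preliminary observation worth recording is that the tangent plane $T_p\mathcal{Q}$ is the \emph{same} subspace of $\mathbb{R}^3$ whether we regard $\mathcal{Q}$ inside $E^3$ or inside $E_1^3$. In the Euclidean picture $T_p\mathcal{Q}=\{\mathbf{v}:\langle\nabla_pF,\mathbf{v}\rangle=0\}=\{\mathbf{v}:\langle B(p-P),\mathbf{v}\rangle=0\}$, while in the Lorentzian picture the pseudo-sphere has position vector $p-P$ as its $(\cdot,\cdot)$-normal, so $T_p\mathcal{Q}=\{\mathbf{v}:(p-P,\mathbf{v})=0\}=\{\mathbf{v}:\langle B(p-P),\mathbf{v}\rangle=0\}$. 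Thus the phrase ``direction $\mathbf{v}\in T_p\mathcal{Q}$'' is unambiguous, and both the casual character and the asymptoticity are tested on the very same vectors.

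Substituting into (\ref{eqNormalCurvLevelSets}) then yields
\begin{equation}
\kappa_n(p,\mathbf{v})=\frac{\langle 2B\mathbf{v},\mathbf{v}\rangle}{\Vert 2B(p-P)\Vert}=\frac{\langle B\mathbf{v},\mathbf{v}\rangle}{\Vert B(p-P)\Vert}=\frac{(\mathbf{v},\mathbf{v})}{\Vert B(p-P)\Vert}.
\end{equation}
The only point requiring care is that the denominator does not vanish: since $r>0$ we have $\langle B(p-P),p-P\rangle=r^2\neq0$, hence $p\neq P$, and because $\det B\neq0$ the vector $B(p-P)$ is nonzero. So the fraction is well defined, and $\kappa_n(p,\mathbf{v})=0$ precisely when $(\mathbf{v},\mathbf{v})=0$.

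Finally I would conclude the equivalence: a nonzero $\mathbf{v}\in T_p\mathcal{Q}$ is an asymptotic direction for $\mathcal{Q}\subseteq E^3$ if and only if $\kappa_n(p,\mathbf{v})=0$, which by the displayed identity holds if and only if $(\mathbf{v},\mathbf{v})=0$, i.e.\ if and only if $\mathbf{v}$ is a lightlike direction for $\mathcal{Q}\subseteq E_1^3$. There is essentially no hard step here; the entire content is the fortunate fact that for a quadratic $F$ the Hessian is a constant multiple of $B$, so that the quadratic form governing asymptotic directions in the Euclidean normal curvature is literally the Lorentzian inner product $(\cdot,\cdot)=\langle B\cdot,\cdot\rangle$. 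The only thing to check with any attention is the nonvanishing of $\nabla_pF$, which is exactly the regularity/non-degeneracy of the quadric.
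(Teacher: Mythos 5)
Your proof is correct and follows essentially the same route as the paper: both substitute the constant Hessian of the quadratic defining function into the level-surface normal-curvature formula (Eq.~(\ref{eqNormalCurvLevelSets})) and read off that $\kappa_n(p,\mathbf{v})=0$ exactly when $\langle B\mathbf{v},\mathbf{v}\rangle=(\mathbf{v},\mathbf{v})=0$. You simply spell out details the paper leaves implicit (the factor of $2$ in $\mathrm{Hess}\,F$, the coincidence of the tangent planes, and the nonvanishing of the gradient, which is the paper's regular-value remark).
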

\begin{proof}
Quadrics are level sets of $F(x)=\langle B\,(x-P),x-P\rangle$ and $\mbox{Hess}\,F=B$. Now, since the quadric is non-degenerate, we have that $\mathcal{Q}$ is the inverse image of a regular value of $F$. So, we can apply Eq. (\ref{eqNormalCurvLevelSets}).
\qed
\end{proof}

Based on these constructions we can better interpret why pseudo-spheres $\mathbb{S}_1^2$ have both space- and timelike tangent vectors, while pseudo-hyperbolic planes $\mathbb{H}_0^2$ only have spacelike ones. Indeed, Eq. (\ref{eqNormalCurvLevelSets}) shows that the sign of the Gaussian curvature (in $E^3$), $K_{E^3}$, has an impact on the casual character of the tangent plane:  points with $K_{E^3}>0$ have spacelike tangent planes, while points with $K_{E^3}<0$ have timelike tangent planes. 

Finally, observe that quadrics are level sets of  $F(x)=\langle B(x-P),x-P\rangle$, which has a constant Hessian: $\mbox{Hess}\,F=B$. This motivates us to consider this procedure for any level surface.

\section{Curves on level surfaces of a smooth function}

Let $\Sigma$ be a surface implicitly defined by a smooth function $F:U\subseteq\mathbb{R}^3\to\mathbb{R}$. Then, the Hessian of $F$ induces on $\mathbb{R}^3$ a (pseudo-) metric 
\begin{equation}
(\cdot,\cdot)_p = \langle\mbox{Hess}_p\,F\,\cdot\,,\cdot\rangle=\left\langle\frac{\partial^2F(p)}{\partial x^i\partial x^j}\,\cdot\,,\cdot\right\rangle\,.\label{eq::HessMetric}
\end{equation}
By using Eq. (\ref{eqNormalCurvLevelSets}), Proposition \ref{prop::InterpretCasualChar} is still valid for $\Sigma$ in the context of a Hessian pseudo-metric. Moreover, if $\det(\mbox{Hess}_pF)\not=0$, then $\mbox{Hess}\,F$ is non-degenerate on a neighborhood of $p$. Likewise, since the eigenvalues vary continuously \cite{SerreMatrixBook} and the index can be seen as the number of negative eigenvalues, the Hessian $\mbox{Hess}\,F$ has a constant index on an open neighborhood. Then, $(\cdot,\cdot)$ in Eq. (\ref{eq::HessMetric}) is well defined on a neighborhood of a non-degenerate point $p$. 

Now we ask ourselves if the techniques developed in the previous sections can be applied to characterize curves that lie on a level surface. Unhappily, we are not able to establish a characterization via a linear equation as previously done. Nonetheless, we can still exhibit a functional relationship between the curvatures $\kappa_1$ and $\kappa_2$ of a Bishop frame of the corresponding curves with respect to the Hessian metric. Before that, let us try to understand the technical difficulties involved in the study of level surfaces:

\begin{example}[index 1 Hessian]
Suppose that $\mbox{index}(\mbox{Hess}\,F)=1$ on a certain neighborhood of a non-degenerate point $p$. Let $\alpha:I\to E^3$ be a curve on a regular level surface $\Sigma = F^{-1}(c)$ whose velocity vector $\alpha'\in T_{\alpha(s)}\Sigma$ is not an asymptotic direction for all $s\in I$, i.e. $\kappa_n(\alpha(s),\mathbf{\alpha}'(s))\not=0$. This means that the curve is timelike or spacelike. Denote by $\{\mathbf{t},\mathbf{n}_1,\mathbf{n}_2\}$ a Bishop frame along $\alpha$, with respect to Eq. (\ref{eq::HessMetric}), and denote by $D$ the covariant derivative and by a prime $'$ the usual one. 

From $F(\alpha(s))=c$ it follows that
\begin{equation}
(\mbox{grad}_{\alpha(s)}F,\mathbf{t}) = 0\Rightarrow\mbox{grad}_{\alpha}F=a_1\mathbf{n}_1+a_2\mathbf{n}_2\,,\label{eq:gradF_hessMet_InNormalPlane}
\end{equation}
where $\mbox{grad}_{\alpha}F$ denotes the gradient vector with respect to $(\cdot,\cdot)$. The coefficients $a_1$ and $a_2$ satisfy $a_i=\epsilon_i(\mbox{grad}_{\alpha}F,\mathbf{n}_i)$ and, therefore,
\begin{eqnarray}
\epsilon_ia_i' & = & (D\,\mbox{grad}_{\alpha}F, \mathbf{n}_i)+(\mbox{grad}_{\alpha}F,D\,\mathbf{n}_i)\nonumber\\
& = & H^F(\mathbf{t}, \mathbf{n}_i)-\epsilon\kappa_i(\mbox{grad}_{\alpha}F,\mathbf{t})\nonumber\\
& = & H^F(\mathbf{t}, \mathbf{n}_i),
\end{eqnarray}
where $H^F$ denotes the Hessian with respect to $(\cdot,\cdot)$, whose coefficients can be expressed as \cite{ONeill}
\begin{equation}
H^F_{ij} = \left(\frac{\partial^2F}{\partial x^i\partial x^j}-\sum_k\Gamma_{ij}^k\frac{\partial F}{\partial x^k}\right)\,.
\end{equation}
From this expression we see that $a_i'$ does not need to be zero and then we can not apply the same steps as in the previous sections. Indeed, the orthogonality of the Bishop frame $\{\mathbf{t},\mathbf{n}_1,\mathbf{n}_2\}$ with respect to $\mbox{Hess}\,F=\partial^2F/\partial x^i\partial x^j$ and $H^F$ does not coincide, unless $\mbox{Hess}\,F$ is constant. 
\qed
\end{example}

\begin{theorem}
Let $\mathcal{U}_p\subseteq\mathbb{R}^3$ be a neighborhood of a non-degenerate point $p\in\Sigma=F^{-1}(c)$ where the index is constant. Let $H^F$ denotes the Hessian with respect to the Hessian metric $(\cdot,\cdot)_q=\langle{\rm Hess}_qF\,\cdot,\cdot\rangle$.

If $\alpha:I\to\mathcal{U}_p\cap\Sigma$ is a $C^2$ regular curve, with no asymptotic direction for ${\rm index}({\rm Hess}\,F
)\not\in\{0,3\}$, i.e., $\kappa_n(\alpha,\alpha')\not=0$, then its normal development $(\kappa_1(s),\kappa_2(s))$ satisfies
\begin{equation}
a_2(s)\kappa_2(s)+a_1(s)\kappa_1(s)+a_0(s)=0, \label{eq::characLevelSurfacesCurves}
\end{equation}
where $a_0=H^F(\mathbf{t},\mathbf{t})$, $a_i=({\rm grad}_{\alpha}F,\mathbf{n}_i)$, and $a_i'(s)=H^F(\mathbf{t},\mathbf{n}_i)$: or $\epsilon_i H^F(\mathbf{t},\mathbf{n}_i)$, $\epsilon_i=(\mathbf{n}_i,\mathbf{n}_i)=\pm1$, if ${\rm index}({\rm Hess}\,F
)\not\in\{0,3\}$. Here, the Bishop frame is defined with respect to the Hessian metric.

Conversely, if Eq. (\ref{eq::characLevelSurfacesCurves}) is valid and $({\rm grad}_{\alpha(s_0)}F,\mathbf{t}(s_0))=0$ at some point $\alpha(s_0)$, then $\alpha$ lies in a level surface of $F$.
\label{theo::CurvesInLevelSets}
\end{theorem}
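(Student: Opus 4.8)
The plan is to isolate a single scalar function along $\alpha$ whose vanishing is equivalent to $\alpha$ lying on a level surface, and to show that its derivative is precisely the left-hand side of Eq. (\ref{eq::characLevelSurfacesCurves}). Write $g(s)=\mbox{grad}_{\alpha(s)}F$ for the gradient with respect to the Hessian metric, characterized by $(g,v)={\rm d}F(v)$ for every $v$, and set
\[
\phi(s)=(g,\mathbf{t})=\frac{{\rm d}}{{\rm d}s}F(\alpha(s)).
\]
This $g$ plays here the role that the position vector $\alpha-P$ played in the spherical case of Theorem \ref{theo::characSpaceAndLightCurves}: on the level surface one has $\phi\equiv0$, so $g$ lies in the normal plane and its normal part expands as $g=a_1\mathbf{n}_1+a_2\mathbf{n}_2$ with $a_i=\epsilon_i(g,\mathbf{n}_i)$, exactly as in the Example preceding the theorem. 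First I would record that the no-asymptotic-direction hypothesis guarantees, via Proposition \ref{prop::InterpretCasualChar}, that $\mathbf{t}$ is non-lightlike when ${\rm index}({\rm Hess}\,F)\in\{1,2\}$, while in the definite cases ${\rm index}\in\{0,3\}$ every direction is automatically non-lightlike; either way a Bishop frame $\{\mathbf{t},\mathbf{n}_1,\mathbf{n}_2\}$ exists, now built with the covariant derivative $D$ of the Hessian metric, obeying the analogue of Eq. (\ref{eq::GenBishopEqsNonlightCurves}): $D\mathbf{t}=\epsilon_1\kappa_1\mathbf{n}_1+\kappa_2\mathbf{n}_2$ and $D\mathbf{n}_i=-\epsilon\kappa_i\mathbf{t}$.

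The computational heart is a single identity valid for \emph{any} $C^2$ curve, whether or not it lies on a level surface. Decomposing $g=\epsilon\phi\,\mathbf{t}+a_1\mathbf{n}_1+a_2\mathbf{n}_2$ and differentiating $\phi=(g,\mathbf{t})$ gives
\[
\phi'=(Dg,\mathbf{t})+(g,D\mathbf{t})=H^F(\mathbf{t},\mathbf{t})+(g,\epsilon_1\kappa_1\mathbf{n}_1+\kappa_2\mathbf{n}_2),
\]
where I use the definition $H^F(X,Y)=(D_Xg,Y)$. Since $D\mathbf{t}$ is orthogonal to $\mathbf{t}$, the tangential part $\epsilon\phi\,\mathbf{t}$ of $g$ contributes nothing to the second term, and with $\epsilon_2=-\epsilon\epsilon_1=+1$ the inner product collapses to $a_1\kappa_1+a_2\kappa_2$. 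Hence
\[
\phi'=a_0+a_1\kappa_1+a_2\kappa_2,\qquad a_0:=H^F(\mathbf{t},\mathbf{t}),
\]
which I regard as the engine driving both implications.

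With this identity both directions are short. For the forward direction, $\alpha\subseteq\Sigma$ means $\phi\equiv0$, hence $\phi'\equiv0$ and Eq. (\ref{eq::characLevelSurfacesCurves}) follows immediately; differentiating $a_i=\epsilon_i(g,\mathbf{n}_i)$ and using $D\mathbf{n}_i=-\epsilon\kappa_i\mathbf{t}$ together with $\phi\equiv0$ gives $a_i'=\epsilon_iH^F(\mathbf{t},\mathbf{n}_i)$, which is the stated formula and reduces to $a_i'=H^F(\mathbf{t},\mathbf{n}_i)$ in the definite cases where $\epsilon_i=1$. For the converse, the assumed relation forces $\phi'\equiv0$ by the master identity, so $\phi$ is constant; the hypothesis $(\mbox{grad}_{\alpha(s_0)}F,\mathbf{t}(s_0))=0$ fixes that constant to be zero, whence $\phi\equiv0$, $F(\alpha(s))$ is constant, and $\alpha$ lies on the level surface $F^{-1}(F(\alpha(s_0)))$.

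The main obstacle is bookkeeping rather than ideas. I expect the delicate step to be tracking the signs $\epsilon=(\mathbf{t},\mathbf{t})$ and $\epsilon_i=(\mathbf{n}_i,\mathbf{n}_i)$ across the two index regimes so that the collapse to $a_1\kappa_1+a_2\kappa_2$ is genuinely coefficient-free; this is exactly why the statement carries the alternative ``$\epsilon_iH^F(\mathbf{t},\mathbf{n}_i)$'' clause. A second point worth emphasizing is \emph{why} one obtains only a functional relation and not a linear one: unlike $\alpha-P$, for which $D(\alpha-P)=\mathbf{t}$ forced the coefficients to be constant in Theorem \ref{theo::characSpaceAndLightCurves}, here $Dg$ is governed by $H^F$, so $a_i'=\epsilon_iH^F(\mathbf{t},\mathbf{n}_i)$ need not vanish. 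When ${\rm Hess}\,F$ is constant the Christoffel symbols vanish, $H^F$ coincides with the Hessian metric itself, so $H^F(\mathbf{t},\mathbf{n}_i)=(\mathbf{t},\mathbf{n}_i)=0$ and $a_0=(\mathbf{t},\mathbf{t})=\epsilon$; the $a_i$ become constant and Eq. (\ref{eq::characLevelSurfacesCurves}) degenerates to $\epsilon+a_1\kappa_1+a_2\kappa_2=0$, recovering the linear spherical characterization.
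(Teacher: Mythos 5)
Your proposal is correct and follows essentially the same route as the paper: both directions rest on differentiating $(\mathrm{grad}_{\alpha}F,\mathbf{t})$ along the curve using the Bishop equations for the Hessian metric, which is exactly the paper's computation of $f''$ for $f(s)=F(\alpha(s))$. Your only (cosmetic) difference is packaging that computation as a single ``master identity'' $\phi'=a_0+a_1\kappa_1+a_2\kappa_2$ serving both implications, together with the correct observation that the tangential component of the gradient drops out of $(g,D\mathbf{t})$, which the paper leaves implicit in the converse.
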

\begin{remark}
If $\Sigma=F^{-1}(c)$, where $c$ is a regular value of $F$, then $\Sigma$ is an orientable surface. The reciprocal of this result is also valid, i.e., every orientable surface is the inverse image of a regular value of some smooth function \cite{Guillemin}. Then, the above theorem can be applied to any orientable surface (we still have to exclude those points where the Hessian has a zero determinant).
\end{remark}
\begin{proof}
{\it of theorem }\ref{theo::CurvesInLevelSets}.
If the index is $0$, then the Hessian metric defines a Riemannian metric (if $\mbox{index}(\mbox{Hess}\,F)=-3$, then its negative defines a metric). On the other hand, the construction of a Bishop frame for a pseudo-metric with index 2 in dimension 3 is completely analogous to the case of index 1. Moreover, when the index of $\mbox{Hess}\,F$ is 1 (or 2), the assumption that $\alpha'$ is not an asymptotic direction means that $\alpha$ must be a space- or a timelike curve.  

In the following, let us assume that ${\rm index}(\mbox{Hess}\,F)=1$, the other cases being analogous. In this case, Eq. (\ref{eq::HessMetric}) defines a pseudo-metric in $\mathcal{U}_p\subseteq\mathbb{R}^3$.

Since $F(\alpha(s))=c$, we have
\begin{equation}
(\mbox{grad}_{\alpha(s)}F,\mathbf{t}) = 0\Rightarrow\mbox{grad}_{\alpha}F=a_1\mathbf{n}_1+a_2\mathbf{n}_2\,,\label{eqGradFHessMetric}
\end{equation}
where $\mbox{grad}_{\alpha}F$ denotes the gradient vector with respect to $(\cdot,\cdot)$. The coefficients $a_1$ and $a_2$ satisfy $a_i=\epsilon_i(\mbox{grad}_{\alpha}F,\mathbf{n}_i)$ and, therefore,
\begin{eqnarray}
a_i' & = & \epsilon_i(D\,\mbox{grad}_{\alpha}F, \mathbf{n}_i)+\epsilon_i(\mbox{grad}_{\alpha}F,D\,\mathbf{n}_i)\nonumber\\
& = & \epsilon_iH^F(\mathbf{t}, \mathbf{n}_i)-\epsilon_i\epsilon\kappa_i(\mbox{grad}_{\alpha}F,\mathbf{t})\nonumber\\
& = & \epsilon_iH^F(\mathbf{t}, \mathbf{n}_i),
\end{eqnarray}
where $H^F$ denotes the Hessian with respect to $(\cdot,\cdot)$ \cite{ONeill}. Taking the derivative of Eq. (\ref{eqGradFHessMetric}) gives
\begin{eqnarray}
0 & = & (D\,\mbox{grad}_{\alpha}F,\mathbf{t})+(\mbox{grad}_{\alpha}F,D\,\mathbf{t})\nonumber\\
& = & H^F(\mathbf{t},\mathbf{t})+(a_1\mathbf{n}_1+a_2\mathbf{n}_2,\epsilon_1\kappa_1\mathbf{n}_1+\kappa_2\mathbf{n}_2)\nonumber\\
& = & H^F(\mathbf{t},\mathbf{t})+a_1\kappa_1+a_2\kappa_2\,.
\end{eqnarray}
Then, Eq. (\ref{eq::characLevelSurfacesCurves}) is satisfied. 

Conversely, suppose that Eq. (\ref{eq::characLevelSurfacesCurves}) is satisfied. Let us define the function $f(s)=F(\alpha(s))$. We must show that $f$ is constant, i.e., $f'(s)=0$. Taking the derivative of $f$ twice gives
\begin{equation}
f' = (\mbox{grad}_{\alpha}F,\mathbf{t}),
\end{equation}
and
\begin{eqnarray}
f'' & = & (D\,\mbox{grad}_{\alpha}F,\mathbf{t})+(\mbox{grad}_{\alpha}F,D\,\mathbf{t})\nonumber\\
& = & H^F(\mathbf{t},\mathbf{t})+\epsilon_1\kappa_1(\mbox{grad}_{\alpha}F,\mathbf{n}_1)+\kappa_2(\mbox{grad}_{\alpha}F,\mathbf{n}_2)\nonumber\\
& = & 0.
\end{eqnarray}
Then, $f'(s)=(\mbox{grad}_{\alpha(s)}F(s),\mathbf{t}(s))$ is constant. By assumption, we have $f'(s_0)=0$, then $f(s)=F(\alpha(s))$ is constant on an open neighborhood of $s_0$, i.e., $\alpha$ lies on a level surface of $F$.
\qed
\end{proof}

\begin{remark}
The Christoffel symbols $\Gamma_{ij}^k$ of a Hessian metric $g_{ij}=\partial^2F/\partial x^i\partial x^j$ vanish if and only if $\mbox{Hess}\,F$ is constant; which is valid for a quadratic surface, this case being treated in the previous section. 
\end{remark}

If $\mbox{Hess}\,F$ degenerates, i.e., $\det(\mbox{Hess}_pF)=0$ at some points, then the Hessian matrix does not define a metric. Nonetheless, it is still possible to characterize curves on a level surface by using the standard metric of $\mathbb{R}^3$. In fact, it can be used even if $\mbox{Hess}\,F$ is non-degenerate, but in this case we do not have non-degenerate quadrics as a particular instance. The obtained criterion is completely analogous to the previous one in Theorem \ref{theo::CurvesInLevelSets}. Indeed, we have

\begin{theorem}
If $\alpha:I\to E^3\cap \Sigma$ is a $C^2$ regular curve, where $\Sigma=F^{-1}(c)$, then its normal development $(\kappa_1(s),\kappa_2(s))$ satisfies
\begin{equation}
b_2(s)\kappa_2(s)+b_1(s)\kappa_1(s)+b_0(s)=0, \label{eq::characLevelSurfacesCurves2}
\end{equation}
where $b_0=\langle ({\rm Hess}\,F)\,\mathbf{t},\mathbf{t}\rangle$, $b_i=\langle\nabla_{\alpha}F,\mathbf{n}_i\rangle$, and $b_i'(s)=\langle ({\rm Hess}\,F)\,\mathbf{t},\mathbf{n}_i\rangle$. Here, the Bishop frame is defined with respect to the usual metric in $E^3$. 

Conversely, if Eq. (\ref{eq::characLevelSurfacesCurves2}) is valid and $\langle\nabla_{\alpha(s_0)}F,\mathbf{t}(s_0)\rangle=0$ at some point $\alpha(s_0)$, then $\alpha$ lies in a level surface of $F$.
\end{theorem}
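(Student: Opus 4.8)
The plan is to mimic the proof of Theorem \ref{theo::CurvesInLevelSets} almost verbatim, but working throughout with the ordinary Euclidean inner product $\langle\cdot,\cdot\rangle$ of $E^3$ rather than the Hessian pseudo-metric. The key observation making this possible is that $\alpha$ is now an ordinary curve in $E^3$, so a genuine Bishop frame $\{\mathbf{t},\mathbf{n}_1,\mathbf{n}_2\}$ exists with no casual-character obstructions, and the Bishop equations are the Euclidean ones from \eqref{eq::BishopEqs}, namely $\mathbf{n}_i'=-\kappa_i\mathbf{t}$ and $\mathbf{t}'=\kappa_1\mathbf{n}_1+\kappa_2\mathbf{n}_2$. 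The Hessian $\mathrm{Hess}\,F$ enters here not as a metric but merely as the matrix of second partials of $F$, and the gradient is the ordinary $\nabla_\alpha F$; crucially, because $E^3$ is flat, the covariant derivative and the ordinary derivative coincide, so there are no Christoffel-symbol correction terms and $\langle\nabla_\alpha F,\cdot\rangle$ differentiates cleanly.

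First I would establish the forward direction. Starting from $F(\alpha(s))=c$, differentiating once gives $\langle\nabla_\alpha F,\mathbf{t}\rangle=0$, so $\nabla_\alpha F$ lies in the normal plane and can be written $\nabla_\alpha F=b_1\mathbf{n}_1+b_2\mathbf{n}_2$ with $b_i=\langle\nabla_\alpha F,\mathbf{n}_i\rangle$. Differentiating $b_i$ and using $\mathbf{n}_i'=-\kappa_i\mathbf{t}$ together with $\langle\nabla_\alpha F,\mathbf{t}\rangle=0$ yields
\begin{equation}
b_i' = \langle (\nabla_\alpha F)',\mathbf{n}_i\rangle + \langle\nabla_\alpha F,\mathbf{n}_i'\rangle = \langle (\mathrm{Hess}\,F)\,\mathbf{t},\mathbf{n}_i\rangle - \kappa_i\langle\nabla_\alpha F,\mathbf{t}\rangle = \langle (\mathrm{Hess}\,F)\,\mathbf{t},\mathbf{n}_i\rangle,\nonumber
\end{equation}
where I use that the directional derivative of $\nabla F$ along $\mathbf{t}$ is $(\mathrm{Hess}\,F)\,\mathbf{t}$. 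Differentiating the relation $\langle\nabla_\alpha F,\mathbf{t}\rangle=0$ a second time and applying $\mathbf{t}'=\kappa_1\mathbf{n}_1+\kappa_2\mathbf{n}_2$ gives $0=\langle(\mathrm{Hess}\,F)\mathbf{t},\mathbf{t}\rangle+b_1\kappa_1+b_2\kappa_2$, which is exactly \eqref{eq::characLevelSurfacesCurves2} with $b_0=\langle(\mathrm{Hess}\,F)\mathbf{t},\mathbf{t}\rangle$.

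For the converse I would set $f(s)=F(\alpha(s))$ and show $f$ is constant. Differentiating, $f'=\langle\nabla_\alpha F,\mathbf{t}\rangle$, and differentiating again using the Bishop equations gives $f''=\langle(\mathrm{Hess}\,F)\mathbf{t},\mathbf{t}\rangle+\kappa_1\langle\nabla_\alpha F,\mathbf{n}_1\rangle+\kappa_2\langle\nabla_\alpha F,\mathbf{n}_2\rangle=b_0+b_1\kappa_1+b_2\kappa_2$, which vanishes by hypothesis \eqref{eq::characLevelSurfacesCurves2}. Hence $f'$ is constant, and the assumption $\langle\nabla_{\alpha(s_0)}F,\mathbf{t}(s_0)\rangle=0$ forces that constant to be zero, so $f$ is locally constant and $\alpha$ lies on a level surface of $F$. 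I expect no genuine obstacle here: the main point is simply recognizing that working in the flat Euclidean metric eliminates the Christoffel terms and the index/casual-character bookkeeping that complicated Theorem \ref{theo::CurvesInLevelSets}, so the whole argument collapses to two clean differentiations in each direction. The one place to be careful is justifying that the ordinary derivative of the gradient field along $\alpha$ equals $(\mathrm{Hess}\,F)\,\mathbf{t}$, which is immediate from the chain rule since $\tfrac{d}{ds}\nabla F(\alpha(s))=D(\nabla F)\cdot\alpha'=(\mathrm{Hess}\,F)\,\mathbf{t}$.
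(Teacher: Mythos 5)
Your proposal is correct and follows essentially the same route as the paper's own proof: differentiate $F(\alpha(s))=c$ to place $\nabla_\alpha F$ in the normal plane, use $\mathbf{n}_i'=-\kappa_i\mathbf{t}$ and $\frac{d}{ds}\nabla F(\alpha(s))=(\mathrm{Hess}\,F)\,\mathbf{t}$ to get the $b_i'$ formula and the linear relation, and for the converse show $f''=0$ so that $f'$ is the constant $f'(s_0)=0$. The paper's argument is identical in every step, so there is nothing to add.
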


\begin{proof}
Let $\{\mathbf{t},\mathbf{n}_1,\mathbf{n}_2\}$ be a Bishop frame along $\alpha:I\to E^3$. If $F(\alpha(s))=c$, then we have
\begin{equation}
\langle\nabla_{\alpha(s)}F,\mathbf{t}\rangle = 0\Rightarrow\nabla_{\alpha}F=b_1\mathbf{n}_1+b_2\mathbf{n}_2\,,\label{eqGradFUsualMetric}
\end{equation}
where $\nabla_{\alpha}F$ denotes the gradient vector with respect to usual metric in $E^3$. The coefficients $b_1$ and $b_2$ satisfy $b_i=\langle\nabla_{\alpha}F,\mathbf{n}_i\rangle$ and, therefore,
\begin{eqnarray}
b_i' = \langle (\mbox{Hess}\,F)\,\mathbf{t}, \mathbf{n}_i\rangle-\kappa_i\langle\nabla_{\alpha}F,\mathbf{t}\rangle=\langle (\mbox{Hess}\,F)\,\mathbf{t}, \mathbf{n}_i\rangle.
\end{eqnarray}
Taking the derivative of Eq. (\ref{eqGradFUsualMetric}) gives
\begin{eqnarray}
0 & = & \langle (\mbox{Hess}\,F)\,\mathbf{t},\mathbf{t}\rangle+\langle b_1\mathbf{n}_1+b_2\mathbf{n}_2,\kappa_1\mathbf{n}_1+\kappa_2\mathbf{n}_2\rangle\nonumber\\
& = & \langle (\mbox{Hess}\,F)\,\mathbf{t},\mathbf{t}\rangle+b_1\kappa_1+b_2\kappa_2\,.
\end{eqnarray}
So, Eq. (\ref{eq::characLevelSurfacesCurves2}) is valid. 

Conversely, suppose that Eq. (\ref{eq::characLevelSurfacesCurves}) is satisfied. Let us define the function $f(s)=F(\alpha(s))$. Taking the derivative of $f$ twice gives
\begin{equation}
f' = \langle\nabla_{\alpha}F,\mathbf{t}\rangle,
\end{equation}
and
\begin{eqnarray}
f''
& = & \langle (\mbox{Hess}\,F)\,\mathbf{t},\mathbf{t}\rangle+\kappa_1\langle\nabla_{\alpha}F,\mathbf{n}_1\rangle+\kappa_2\langle\nabla_{\alpha}F,\mathbf{n}_2\rangle= 0.
\end{eqnarray}
Then, $f'(s)=\langle\nabla_{\alpha(s)}F(s),\mathbf{t}(s)\rangle$ is constant. By assumption, we have $f'(s_0)=0$, then $f(s)=F(\alpha(s))$ is constant on an open neighborhood of $s_0$, i.e., $\alpha$ lies on a level surface of $F$.
\qed
\end{proof}

\section{Discussion and Conclusions}

In this work we were interested in the characterization of curves that lie on a given surface. The main tool to achieve that was the use of moving frames along curves. In the construction of Frenet frames in Lorentz-Minkowski spaces $E_1^3$, we showed that the coefficient matrix of the frame motion can be obtained from a skew-symmetric matrix (precisely the matrix that would appear in an Euclidean context) through a right-multiplication by the matrix that describe a frame $\{\mathbf{e}_0,\mathbf{e}_1,\mathbf{e}_2\}$ as a basis of $E_1^3$: $[(\mathbf{e}_i,\mathbf{e}_j)]_{ij}$. Later, by adapting Bishop's idea of relatively parallel moving frames, we were able to furnish a complete characterization of spherical curves in $E_1^3$ through a linear equation relating the coefficients which dictate the frame motion. To attain that, we developed a systematic approach to the construction of Bishop frames by exploiting the structure of the normal planes induced by the casual character of the curve, while for lightlike curves we made use of null frames. In both cases, the coefficient matrix of the frame motion can be obtained from a skew-symmetric matrix, the matrix that would appear in an Euclidean context, through a right-multiplication by the matrix that describes the frame as a basis. We then applied these ideas to surfaces that are level sets of a smooth function, $\Sigma=F^{-1}(c)$, by reinterpreting the problem in the context of the metric given by the Hessian of $F$, which is not always positive definite. So, we are naturally led to the study of curves in $E_1^3$. We also interpreted the casual character that a curve may assume when we pass from $E^3$ to $E_1^3$ and finally established a criterion for a curve to lie on a level surface of a smooth function, which reduces to a linear equation when the Hessian is constant and happens for non-degenerate Euclidean quadrics.

An interesting problem which remains open is to consider the possibility of a curve changing its casual character. Since the property of being space- or timelike is open, i.e., if it is valid at a point it must be valid on a neighborhood of that point, the real problem is to understand what happens near lightlike points. Moreover, the techniques applied here can be extended to higher dimensions and also to the setting of a Riemannian or a semi-Riemannian manifold $M_{\nu}^n$. Indeed, we believe that it is possible to systematically build Bishop and null frames along curves in $M_{\nu}^n$ as done in this work and then apply these constructions to study level hypersurfaces of a smooth function $F:M_{\nu}^3\to\mathbb{R}$. Since the relation between normal curvature and the Hessian with respect to a (pseudo)-metric is still valid, we can also interpret what happens in the transition from $M_{\nu}^n$ to the new geometric setting of a Hessian metric, which may be of a Lorentzian nature since the Hessian may fail to be positive definite.

To the best of our knowledge, this was the first time that the characterization problem for curves was considered for a large class of surfaces. This makes this work an important contribution to the geometry of curves and surfaces.

\begin{acknowledgements}
The author would like to thank useful discussions with J. Deibsom da Silva and F. A. N. Santos, and also the financial support by Conselho Nacional de Desenvolvimento Cient\'ifico e Tecnol\'ogico - CNPq (Brazilian agency).
\end{acknowledgements}

\end{document}